\newtheorem{ejem}{Example}
\newtheorem{defi}[ejem]{Definition}
\newtheorem{teo}[ejem]{Theorem}
\newtheorem{prop}[ejem]{Proposition}
\newtheorem{lema}[ejem]{Lemma}
\newtheorem{coro}[ejem]{Corollary}
\numberwithin{ejem}{section}
\title{Recovering trees from the cohomology ring of their configuration spaces}
\author{Teresa I. Hoekstra Mendoza \footnote{Supported by CONACYT FORDECYT-PRONACES/39570/2020}}
\begin{document}
\maketitle
\begin{abstract}
    Given a tree $T$, the cohomology ring of its unordered configuration space $H^{\ast}(U\mathcal{D}^nT)$ is an exterior face algebra if $T$ is a binary core tree (if by removing the leaves from $T$ we obtain a binary tree), or if $n=4$. This means that every cup product is determined by a simplicial complex $K_nT$. In this paper we show how to recover the tree $T$ from the simplicial complex $K_nT$ when $n=4.$
\end{abstract}


\section{Introduction}
For a finite graph $G$ and a positive integer $n$, the discretized unlabelled configuration space on $n$ points of $G$ is defined as $$U\mathcal{D}^nG=\{\{x_1, \dots,x_n \}: x_i \in V(G)\cup E(G), x_i\cap x_j = \emptyset \mbox{ if } i \neq j\}.$$
Configuration spaces of graphs have been widely studied, in particular, their cohomology ring is well known when $G$ is a tree.
Using discrete Morse theory techniques, D. Farley gave in \cite{F1} an efficient description of the additive structure of the cohomology ring of $U\mathcal{D}^nG$ when $G$ is a tree $T$. Later, and in order to get to the multiplicative structure, the Morse theoretic methods were replaced in \cite{F2} by the use of a Salvetti complex $\mathcal{S}$ obtained by identifying opposite faces of cells in $U\mathcal{D}^nT$.
Being a union of tori, $\mathcal{S}$ has a well understood cohomology ring.

 Given a tree $T$ we can construct another tree $F(T)$, where the vertex set of $F(T)$ is $V(F(T))= \{ x \in V(T): d(x)>2\}$ and two vertices are adjacent in $F(T)$ if the unique path joining them in $T$ does not contain any other vertex of $F(T)$. We say that a tree $T$ is a binary core tree if $F(T)$ is a binary tree.
In \cite{GH}, the following theorem was proven:
\begin{teo}{\cite{GH}}
Given an integer $n\geq 4$, the cohomology ring $H^{\ast} (U\mathcal{D}^nT)$ is an exterior face ring if either $n=4$, or $T$ is a binary core tree.
\end{teo}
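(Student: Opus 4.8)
The plan is to work with Farley's combinatorial model throughout. By \cite{F1}, $H^{\ast}(U\mathcal{D}^nT)$ is a free abelian group with a preferred basis given by the \emph{critical cells} of a discrete Morse function attached to a rooted planar structure on $T$: a critical $k$-cell records $k$ ``critical edges'' sitting at $k$ essential vertices of $T$ together with a compatible placement of the remaining $n-k$ ``blocking'' vertex-objects. By \cite{F2}, the Salvetti complex $\mathcal{S}\simeq U\mathcal{D}^nT$ is a union of coordinate tori (one $k$-torus per critical $k$-cell), which yields an \emph{explicit} cup-product formula on the critical-cell basis. Let $\{[c]\}$ denote the degree-one critical cells; since $H^{\ast}(U\mathcal{D}^nT)$ is torsion-free (\cite{F1}) and graded-commutative, $[c]^2=0$, so there is a ring homomorphism $\varphi\colon\Lambda[\,e_c\,]\to H^{\ast}(U\mathcal{D}^nT)$, $e_c\mapsto[c]$. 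Set $K_nT=\{\,S:\ \prod_{c\in S}[c]\neq 0\,\}$; this is automatically a simplicial complex (by associativity, a subset of a face is a face), and $\varphi$ factors through $\psi\colon\Lambda[K_nT]\to H^{\ast}(U\mathcal{D}^nT)$. The theorem is precisely the assertion that $\psi$ is an isomorphism, which splits into: (I) $H^{\ast}$ is generated in degree one (so $\psi$ is onto), and (II) the ``face monomials'' $\{\prod_{c\in S}[c]:S\in K_nT\}$ are $\mathbb{Z}$-linearly independent (so $\psi$ is injective). I would prove (I) and (II) simultaneously by establishing that each face monomial equals $\pm$ a single critical cell and that $S\mapsto$ (that cell) is a bijection from the faces of $K_nT$ onto the critical cells.

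For $n=4$ the situation simplifies drastically. It follows from the critical-cell conditions of \cite{F1} that each critical edge of a critical cell must be accompanied by its own blocking vertex-object, so a critical $k$-cell consumes at least $2k$ points; hence $H^{\ge 3}(U\mathcal{D}^4T)=0$ and only the pairing $H^1\otimes H^1\to H^2$ is at stake. For a critical $2$-cell $\sigma$ --- two critical edges at essential vertices $u<v$, plus two blockers --- the count $2+2=4$ is \emph{tight}, and I would use this rigidity to ``peel $\sigma$ apart'': let $c_u$ be the critical $1$-cell obtained by keeping the critical edge and blocker at $u$ while demoting the critical edge and blocker at $v$ to vertex-objects, and symmetrically $c_v$; then Farley's product formula should give $[c_u]\cdot[c_v]=\pm[\sigma]$ with no other term, and $\sigma$ recovers $\{c_u,c_v\}$ uniquely. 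This yields the required bijection between the edges of $K_4T$ and the critical $2$-cells, hence $\psi$ is an isomorphism when $n=4$.

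For a binary core tree $T$ (and arbitrary $n\ge 4$) there is no dimension collapse, so (I) and (II) must be handled in every degree. The structural input is that every essential vertex of $T$ has degree $3$: at such a vertex a critical edge has only two admissible branches and its blocking data is confined to a single branch, so no two active essential vertices can be adjacent and a critical $k$-cell always has enough ``room'' to carry $k$ critical edges at $k$ distinct essential vertices. I would show by induction on $k$ that such a cell is the product of the $k$ critical $1$-cells obtained by keeping one critical edge (with its local blocker) at a time and demoting the rest, again with Farley's formula contributing a single term; the bijection onto critical cells follows. The degree-$3$ hypothesis is used precisely to guarantee that these demotions stay critical and that the products do not acquire extra terms --- and this is exactly the step that fails for a core vertex of degree $\ge 4$ once $n\ge 5$: there the extra point may be parked on a ``spare'' branch of such a vertex, producing a critical $2$-cell that is not a product of two critical $1$-cells (or whose expressions as products collide), so that $\psi$ fails to be bijective and $H^{\ast}$ is not an exterior face ring.

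The main obstacle is the combinatorial core of the last two paragraphs: converting Farley's explicit cup-product formula into the statement ``every face monomial is $\pm$ one critical cell and the correspondence with critical cells is a bijection.'' I expect the delicate points to be (a) defining the ``demotion'' operation on critical cells so that its output is always critical in the cases at hand, and (b) tracking the sign and multiplicity coefficients in Farley's formula to rule out cancellation and spurious terms. It is in (a)--(b) that the two hypotheses do their work: $n=4$ through the tight ``two edges plus two blockers'' count that leaves no slack, and the binary core condition through the rigidity of degree-$3$ essential vertices.
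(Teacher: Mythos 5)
This theorem is not proved in the present paper at all: it is quoted from \cite{GH}, and the body of the paper only uses it. So the comparison can only be with the argument in \cite{GH}, whose general framework (Farley's critical-cell basis from \cite{F1}, the cup-product description via the Salvetti complex of \cite{F2}, and the map $\Lambda[K_nT]\to H^{\ast}(U\mathcal{D}^nT)$ that must be shown to be an isomorphism) your outline correctly reproduces. The sound parts of your proposal are the formal ones: $[c]^2=0$ by graded commutativity and torsion-freeness, $K_nT$ is downward closed, and for $n=4$ the count ``each critical edge needs its own blocked vertex, hence a critical $k$-cell uses at least $2k$ tokens'' does kill $H^{\ge 3}$ and reduces everything to the pairing $H^1\otimes H^1\to H^2$.

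The genuine gap is that the entire mathematical content is concentrated in the step you defer: that every face monomial equals $\pm$ a single critical cell with ``no other term,'' and that $S\mapsto(\text{that cell})$ is a bijection onto critical cells. This is not something Farley's formula gives you for free on the raw Morse-theoretic basis: the cup product of two degree-one basis classes is in general a signed \emph{sum} of critical $2$-cells (and can vanish for reasons invisible to your ``demotion'' picture), so the single-term claim, the absence of cancellation, and the injectivity of the proposed correspondence are exactly what has to be proved, not assumed. In \cite{GH} this is where the real work lies: the degree-one generators are normalized classes encoded by tuples $(k,x,p,q)$ recording how the remaining tokens are distributed among the directions at an essential vertex (the same parametrization used in this paper's definition of $K_4T$), and the nonvanishing of products is settled by a case analysis of stacked versus non-stacked essential vertices, which is what ultimately produces the adjacency rules of $K_4T$. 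Your items (a) (the demotion operation stays critical) and (b) (sign/multiplicity bookkeeping, no spurious terms) are precisely this analysis, and without them the proposal is a plausible plan rather than a proof; likewise the closing claim about failure for non-binary core trees with $n\ge 5$ is speculative and, in any case, not needed for the stated implication.
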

 This means that when $T$ is a binary core tree or $n=4$, all products in $H^{\ast}(UD^nT)$ are given by a simplicial complex $K_nT$, where the vertices of $K_nT$ are the basis elements of dimension 1, and a set of $k$ vertices forms a simplex in $K_nT$ if and only if the cup product of the corresponding $k$ elements is non zero. 
The simplicial complex $K_nT$ is called the $n$-interaction complex of $T$.

\

The 1-skeleton of the space $U\mathcal{D}^nT$, coincides precisely with the token graph $F_n(T)$ of $T$ on $n$ tokens.
In \cite{R}, Fabila-Monroy and Trujillo-Negrete proved that if a graph does not have induced cycles of length four, or induced diamonds (a graph isomorphic to $K_4\setminus \{e\}$ the complete graph on four vertices without an edge), then the graph $G$ is uniquely reconstructible from $F_k(G)$  
and gave an algorithm to do so in polynomial time. One then might ask if there is a topological version of this, this is, if we can recover the tree $T$ from a topological property of $U\mathcal{D}^nT$.  We are going to prove that we can recover the tree $T$ from the $n$-interaction complex $K_nT$ in the case when $n=4$, for any tree $T$.

\

We shall assume that the tree $T$ is embedded in the plane and has as root a vertex of degree one $\star$. The edges inciding in a vertex $x$ are enumerated by this embedding and we fix the edge that lies on the unique $x\star$ path to be the $0$th edge.  We shall say that a vertex $y$ lies on $x$-direction $i$ for $1 \leq i \leq d(x)-1$ if $x$ belongs to the unique path joining $y$ and the root vertex, and if this path contains the $i$th edge inciding in $x$. This also implies that $x$ lies on $y$-direction $0$. If $x$ does not belong to the $y\star$ path and $y$ does not belong to the $x\star $ path we shall also say that $x$ lies on $y$-direction $0$ and $y$ lies on $x$-direction $0$, in which case we say that the vertices $x$ and $y$ are not stacked. 
Given two vertices of degree at least three $x$ and $y$, we shall say that $x<y$ if they are stacked and $x$ lies on $y$-direction zero, or if they are not stacked and there exists $z$ such that $x$ lies on $z$-direction $i$ and $ y$ lies on $z$-direction $j$ with $i<j$.
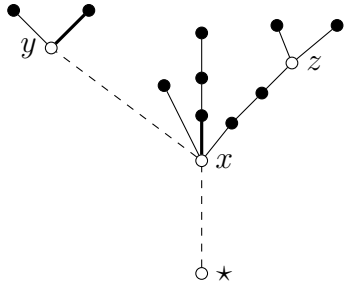
\begin{figure}[h!]
    \centering
  \begin{tikzpicture}
\node[circle, draw, scale=.4] (1xz) at (5,5){};
\node (0) at (5.3,5){$\star$};
\node[circle, draw, scale=.4] (2xz) at (5,6.5){};
\node (x) at (5.3,6.5){$x$};
\node[circle, draw, scale=.4] (3xz) at (3,8){};
\node (y) at (2.7,8){$y$};
\node[circle, draw, scale=.4, fill=black] (4xz) at (2.5, 8.5){};
\node[circle, draw, scale=.4,fill=black] (5xz) at (3.5,8.5){};
\node[circle, draw, scale=.4, fill=black] (6xz) at (4.5,7.5){};
\node[circle, draw, scale=.4,fill=black] (7xz) at (5,7.1){};
\node[circle, draw, scale=.4, fill=black] (8xz) at (5,7.6){};
\node[circle, draw, scale=.4, fill=black] (9xz) at (5,8.2){};
\node[circle, draw, scale=.4, fill=black] (10xz) at (5.4, 7){};
\node[circle, draw, scale=.4, fill=black] (11xz) at (5.8,7.4){};
\node[circle, draw, scale=.4] (12xz) at (6.2,7.8){};
\node (z) at (6.5,7.8){$z$};
\node[circle, draw, scale=.4, fill=black] (n) at (6.8,8.3){};
\node[circle, draw, scale=.4, fill=black] (m) at (6,8.3){};
\draw (n)--(12xz);
\draw (m)--(12xz);

\draw (11xz)--(12xz);
\draw (11xz)--(10xz);
\draw (2xz)--(10xz);
\draw[dashed] (1xz) -- (2xz);
\draw[dashed] (2xz)--(3xz);
\draw[very thick] (3xz)--(5xz);
\draw (4xz)--(3xz);
\draw (2xz)--(6xz);
\draw[very thick] (2xz)--(7xz);
\draw (7xz)--(8xz);
\draw (8xz)--(9xz);
\end{tikzpicture}
    \caption{The tree $T$ containing the vertices $x,y$ and $z$.}
    \label{stack}
\end{figure}

\begin{ejem}
Consider the tree shown in Figure \ref{stack}, where the dashed edges represent paths of any length greater than two. Then the vertices $x$ and $y$ are stacked, the vertices $z$ and $x$ are also stacked but the vertices $y$ and $z$ are not stacked.
\end{ejem}

We are going to define the interaction complex $K_nT$ only for the case when $n=4$, in which case it is a graph.
For the the definition of the interaction complex in the general case we refer the reader to \cite{GH}.

\section{The graph $K_4T$}

\begin{defi}
The graph $K_4T$ is defined as follows:
\begin{itemize}
    \item The vertices are 4-tuples $(k,x,p,q)$ such that $x$ is a vertex of $T$ with $d(x) \geq 3$, $k$ is a non-negative integer and $p$ and $q$ are integer vectors having non-negative entries such that $p$ has at least one positive entry, the sum of their lengths $l(p)+l(q)$ is $d(x)-1$ and the sum of their entries is $3-k$.
    \item Let $v=(k_1, x_1, p_1, q_1)$ and $w=(k_2, x_2, p_2, q_2)$ be two vertices, then
    \begin{enumerate}
        \item if the vertices $x_1$ and $x_2$ are not stacked, then $(v,w)\in E(K_4T)$ if $k_1 +k_2 \geq 4$
        \item if $x_2$ lies on $x_1$-direction $i$ with $l(p_1)\geq i$, then $(v,w)\in E(K_4T)$ if $p_{1,i}> 4-k_2$ or $p_{1,i}+k_2=4$ and there exists a $j \neq i$ such that $p_{1,j} \neq 0$
        \item if $x_2$ lies on $x_1$-direction $i$ with $l(p_1)< i$, then $(v,w)\in E(K_4T)$ if $q_{1,i-l(p_1)}> 4-k_2$ 
    \end{enumerate}
\end{itemize}
\end{defi}

In particular, the graph $K_4T$ has a lot of isolated vertices, as we can see in the following lemma.
\begin{lema}
A vertex $v=(k,x,p,q)$ is an isolated vertex if one of the following conditions hold:
\begin{itemize}
    \item $k=1$
    \item the sum of the entries of $p$ is two.
\end{itemize}
\end{lema}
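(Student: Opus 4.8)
The plan is to check directly that no edge of $K_4T$ can be incident to a vertex $v=(k,x,p,q)$ meeting either hypothesis, by running through the three clauses in the definition of $E(K_4T)$ and showing that each of the defining inequalities must fail. The first thing I would record is a uniform bound on the coordinates of any vertex of $K_4T$: since $p$ is required to have at least one positive entry, the common value $3-k=\sum_i p_i+\sum_i q_i$ is at least $1$, so every vertex $(k_2,x_2,p_2,q_2)$ of $K_4T$ has $k_2\in\{0,1,2\}$, and, all entries being non-negative, each single entry of $p_2$ or of $q_2$ is at most $3-k_2$. For our vertex $v$, each of the two hypotheses forces $k\le 1$ and $\sum_i p_i\le 2$, and in the second case additionally $\sum_i q_i=1-k\le 1$; these are the only properties of $v$ that will enter the argument.

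Next I would fix an arbitrary vertex $w=(k_2,x_2,p_2,q_2)$ and split according to the mutual position of $x$ and $x_2$. If $x$ and $x_2$ are not stacked, clause (1) yields an edge only when $k+k_2\ge 4$, which is impossible since $k\le 1$ and $k_2\le 2$. If $x_2$ lies on $x$-direction $i$ with $i\ge 1$, then only clause (2) or (3), applied to the ordered pair $(v,w)$, can produce an edge, and the relevant inequality compares the $i$-th entry of the concatenation $(p,q)$ with $4-k_2$: for $i\le l(p)$ one needs $p_i>4-k_2$, or $p_i+k_2=4$ together with some further entry of $p$ nonzero; for $i>l(p)$ one needs $q_{i-l(p)}>4-k_2$. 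Since $4-k_2\ge 2$ while every entry of $p$ and of $q$ is at most $2$, the strict inequalities fail, and $p_i+k_2=4$ forces $p_i=2$, which already uses up all of $\sum_i p_i$ and leaves no other nonzero entry of $p$. In the remaining case, $x$ lies on $x_2$-direction $i'$ with $i'\ge 1$, and the same analysis applies to the ordered pair $(w,v)$; now the inequality compares an entry of $p_2$ or of $q_2$ with $4-k\ge 3$, while every entry of $p_2$ and of $q_2$ is at most $3-k_2\le 3$, so no strict inequality holds, and the borderline equality $p_{2,i'}+k=4$ forces $k=1$ and $p_{2,i'}=3$, which exhausts $\sum_i p_{2,i}$ and again leaves no other nonzero entry.

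Assembling the cases shows that $v$ is adjacent to no vertex of $K_4T$, hence is isolated. I expect the one place that needs care — rather than being a routine comparison of small integers — is the bookkeeping in clauses (2) and (3): one must keep track of which of $v,w$ occupies the ``$x_1$'' slot (the vertex that sees the other in a direction of index $\ge 1$), recall that a vertex lying on another's direction $0$ never triggers clauses (2) or (3) at all, and treat the equality alternative $p_{1,i}+k_2=4$ separately from the strict inequality $p_{1,i}>4-k_2$. Everything else follows from the two bounds on $\sum_i p_i$ and $\sum_i q_i$ recorded at the outset.
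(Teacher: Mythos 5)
Your proof is correct and follows the same direct case-check against the definition of $E(K_4T)$ that the paper uses, bounding the individual entries of $p$ and $q$ via the constraint $\sum_i p_i+\sum_i q_i=3-k$ and comparing them with $4-k_2$. If anything, your write-up is slightly more complete than the paper's: you explicitly dispose of the equality alternative $p_{1,i}+k_2=4$ in clause (2) and of the reversed ordering (where $v$ occupies the second slot and the relevant bound becomes $4-k\geq 3$), both of which the paper's proof passes over in silence.
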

\begin{proof}
   Assume first that $k=1$. Notice that since $l \in \{0,1,2\}$, if a vertex $w=(l,y,r,s)$ is such that $x$ and $y$ are not stacked, then both $k$ and $l$ must be two for $v$  and $w$ to be adjacent. If $x$ and $y$ are  stacked, every entry of $p=(p_1, \dots, p_{l(p)})$ is at most two, thus $2 \geq p_i$ and $ 4-l \geq 2$. Finally notice that every entry of $q=(q_1, \dots, q_{l(q)})$ is at most one so  $1 \geq q_{i-l(p_1)}$ and $ 4-k_2 \geq 2.$ This proves that every vertex with $k=1$ is isolated.
 Assume now that the sum of the entries of $p$ is two, and consider again $w=(l,y,r,s)$. This means that $k=0$ and $q$ has one unique entry with value one. Then again  $2 \geq p_i$, $ 4-l \geq 2$, $1 \geq q_{i-l(p_1)}$ and $ 4-k_2 \geq 2$  thus $v$ is an isolated vertex.
 \end{proof}
 For our purposes, the isolated vertices are not relevant, so we shall ignore them.
\begin{prop}\label{num}
Given a vertex $x$ of $T$, fix $i\in \{ 1, \dots, d(x)-1\}$ and let $$\Lambda_i=\{(k,x,p,q)\in K_4T: p_i\geq 2 \mbox{ if } i\leq l(p) \mbox{ or } q_{i-l(p)}\geq 2 \mbox{ if } i > l(p)\} .$$
Then the set $\Lambda_i$ has cardinality $\frac{1}{2}(d(x)-1)(d(x)-2)$, and this is also the amount of vertices having $k=2.$
\end{prop}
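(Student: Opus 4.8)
The plan is to count the tuples $(k,x,p,q)$ lying in $\Lambda_i$ directly, and separately count those with $k=2$, and check the two numbers agree with $\tfrac12(d(x)-1)(d(x)-2)$. Throughout write $m=d(x)-1$, so that $p$ and $q$ are non‑negative integer vectors with $l(p)+l(q)=m$, the vector $p$ has at least one positive entry, and the entries of $p$ and $q$ sum to $3-k$; in particular $k\in\{0,1,2,3\}$, but $k=3$ forces all entries zero contradicting $p\neq 0$, so $k\in\{0,1,2\}$.

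First I would handle the $k=2$ count, which is the cleaner of the two. When $k=2$ the entries of $p$ and $q$ sum to $3-k=1$, so exactly one entry among the $m$ slots equals $1$ and the rest are $0$. Since $p$ must have a positive entry, that single $1$ must sit in $p$; equivalently $p=(0,\dots,0,1)$ with the $1$ in the last slot (so that $p$ is nonzero and $q$ absorbs the remaining slots), OR — and here I need to be careful about exactly how the pair $(p,q)$ is allowed to be split — the splitting of the $m$ slots into a length-$l(p)$ prefix and a length-$l(q)$ suffix is itself part of the data. So a $k=2$ vertex is determined by: the position $i\in\{1,\dots,m\}$ of the unique $1$, together with a choice of $l(p)\ge i$ (so that the $1$ lands inside $p$, forcing $p\neq0$) with $l(p)\le m$. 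For fixed $i$ there are $m-i+1$ admissible values of $l(p)$, wait — I should double-check whether $l(p)$ can also be less than the position of the $1$, in which case $p$ would be all zeros and hence forbidden; so indeed $l(p)$ ranges over $\{i,i+1,\dots,m\}$ giving $m-i+1$ choices. Summing over $i=1,\dots,m$ gives $\sum_{i=1}^m (m-i+1)=\sum_{j=1}^m j=\tfrac{m(m+1)}2$. Hmm, that is $\tfrac12 d(x)(d(x)-1)$, not $\tfrac12(d(x)-1)(d(x)-2)$, so I have miscounted the constraints — most likely the intended convention fixes $l(p)$ and $l(q)$ as determined by $x$ (e.g. $l(p)$ counts the directions that are "stacked below" some fixed structure), OR there is an additional normalization (such as $p$ being required to have its support as a prefix, or the last entry of $q$ forced positive). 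I would pin down the precise convention from \cite{GH} or from the adjacency rules in the Definition, re‑derive the count, and confirm it collapses to $\binom{m}{2}=\tfrac12(d(x)-1)(d(x)-2)$.

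Next, for $\Lambda_i$: a vertex lies in $\Lambda_i$ exactly when the $i$‑th among the $m$ coordinates of the concatenation $(p,q)$ is $\ge 2$. Since the total of all coordinates is $3-k\le 3$ and one coordinate already contributes $\ge 2$, the remaining budget is $3-k-(\text{that coordinate})\le 1$. So either that coordinate is exactly $2$ and $k=0$ or $k=1$ with one remaining $1$ placed somewhere, or the coordinate is $3$, $k=0$, and nothing else. I would enumerate these few shapes, again keeping track of which splittings $(l(p),l(q))$ are legal (the $i$‑th coordinate must lie in $p$ or in $q$, and $p$ must remain nonzero). The bookkeeping is a small finite case analysis once the convention is fixed, and it should again telescope to $\tfrac12(d(x)-1)(d(x)-2)$. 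Finally I would observe that the two counts match by a bijection or simply by both equalling $\binom{d(x)-1}{2}$, which also explains the phrasing of the statement.

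The main obstacle, as the paragraph above already signals, is getting the \emph{combinatorial conventions} exactly right: how the $d(x)-1$ directions are partitioned into the $p$‑block and the $q$‑block, whether that partition is fixed by $x$ or is free data, and what the "at least one positive entry of $p$" requirement rules out in borderline cases. Once those conventions are nailed down the proof is a routine generating‑function or stars‑and‑bars computation; the risk is entirely in an off‑by‑one in the range of $l(p)$, which is precisely the discrepancy I flagged between $\tfrac12 d(x)(d(x)-1)$ and $\tfrac12(d(x)-1)(d(x)-2)$.
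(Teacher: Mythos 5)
Your approach---enumerating the admissible shapes of $(p,q)$ together with the split point $l(p)$---is exactly the paper's approach, and you correctly locate where the difficulty sits, but the proposal does not close either count, so there is a genuine gap rather than a complete proof. The paper resolves the off-by-one you flag by taking $l(p)$ to range over $\{1,\dots,d(x)-2\}$, i.e.\ $l(q)\geq 1$: for $k=2$ the single $1$ must lie in $p$, giving $l(p)$ choices of position for each admissible length, and $\sum_{l=1}^{d(x)-2} l = \tfrac{1}{2}(d(x)-1)(d(x)-2)$. Your computation permits $l(p)=d(x)-1$ and therefore lands on $\tfrac{1}{2}\,d(x)(d(x)-1)$; you acknowledge the discrepancy as ``a convention to be pinned down'' but never pin it down, and since this convention carries the whole content of the statement, the argument as written does not establish the formula.

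The second missing ingredient concerns $\Lambda_i$. The paper's enumeration there depends essentially on the Lemma immediately preceding the proposition: vertices with $k=1$, and vertices whose $p$-entries sum to two, are isolated and are discarded from $K_4T$. This reduction is what kills the $k=1$ contribution and forbids the configuration ``entry $2$ in $p$ plus a leftover $1$ in $q$'' (and, together with the requirement that $p$ have a positive entry, it also rules out placing the entry $\geq 2$ inside $q$). What survives is precisely $p_i=3$, or $p_i=2$ with one additional $1$ elsewhere in $p$---that is, $l(p)$ options for each admissible $l(p)$, reproducing the same sum $\tfrac{1}{2}(d(x)-1)(d(x)-2)$. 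Your sketch of the $\Lambda_i$ count (``enumerate these few shapes'') makes no use of the isolated-vertex reduction, and without it the case analysis does not telescope to $\binom{d(x)-1}{2}$: you would be counting vertices that the proposition implicitly excludes. To repair the proof you need both (a) the constraint $l(p)\leq d(x)-2$ and (b) the explicit appeal to the isolated-vertex Lemma; with those two inputs your enumeration becomes the paper's.
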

\begin{proof}
Assume first that $k=0$.
Since the sum of the entries of $p=(p_1, \dots, p_{l(p)})$ and $q$ must be at most $3-k=3$, and one entry is fixed to be 2, there can only be one additional non zero entry. Recall also that $p$ must have at least one zero entry, and since we are considering only vertices in which the sum of the entries of $p$ is not two, $p$ must have an additional non zero entry or $p_i =3$. Then for every possible length of $p$, we have $l(p)$ options, one for each additional 1 as an entry. Since $l(p)$ varies between $1$ and $d(x) -2$ we have $\sum_{i=1}^{d(x)-2} i= (d(x)-1)(d(x)-2).$ The same happens if $k=2$, since the remaining 1 must be an entry of $p$.
\end{proof}
\begin{figure}[h!]
\centering
\begin{tikzpicture}[every node/.style={circle, draw, scale=.6}, scale=1.0,
rotate = 180, xscale = -1]

\node (1) at ( 2, 2.58) {};
\node (2) at ( 1, 3.5) {};
\node (3) at ( 3, 3.5) {};
\node (13) at ( 2, 4.57) {};
\node (14) at ( 2, 1.46) {};
\node (15) at ( 0, 3.5) {};
\node (16) at ( 2, 5.86) {};
\node (17) at ( 4, 3.5) {};
\node (18) at ( 5.5, 2.73) {};
\node (19) at ( 6.0, 3.68) {};
\node (20) at ( 6.65, 2.69) {};
\node (21) at ( 5.59, 4.68) {};
\node (22) at ( 6.66, 4.67) {};
\node (23) at ( 2.86, 2) {};
\node (24) at ( 1.32, 1.69) {};
\node (25) at ( 0.6, 2.73) {};
\node (26) at ( 0.6, 4.47) {};
\node (27) at ( 1.44, 5) {};
\node (28) at ( 2.5, 5.0) {};
\node (29) at ( 4, 2.82) {};
\node (30) at ( 3.7, 4.5) {};

\draw (13) -- (2);
\draw (3) -- (13);
\draw (1) -- (3);
\draw (2) -- (1);
\draw (13) -- (1);
\draw (3) -- (2);
\draw (14) -- (1);
\draw (17) -- (3);
\draw (16) -- (13);
\draw (15) -- (2);
\draw (19) -- (18);
\draw (21) -- (19);
\draw (22) -- (19);
\draw (20) -- (19);
\draw (1) -- (24);
\draw (23) -- (1);
\draw (29) -- (3);
\draw (30) -- (3);
\draw (28) -- (13);
\draw (27) -- (13);
\draw (2) -- (25);
\draw (26) -- (2);

\node (1x) at ( 9.44, 2.08) {};
\node (2x) at ( 10.78, 1.72) {};
\node (3x) at ( 11.88, 2.44) {};
\node (4x) at ( 11.89, 3.43) {};
\node (5x) at ( 11.18, 4.27) {};
\node (6x) at ( 10.12, 4.22) {};
\node (7x) at ( 9.35, 3.2) {};
\node (8x) at ( 10.88, 0.63) {};
\node (9x) at ( 12.5, 1.8) {};
\node (1x0) at ( 12.99, 3.67) {};
\node (1x1) at ( 11.71, 5.0) {};
\node (1x2) at ( 9.9, 5.32) {};
\node (1x3) at ( 8.13, 3.12) {};
\node (1x4) at ( 8.67, 1.31) {};

\draw (1x) -- (1x4);
\draw (2x) -- (8x);
\draw (9x) -- (3x);
\draw (1x0) -- (4x);
\draw (1x1) -- (5x);
\draw (1x2) -- (6x);
\draw (7x) -- (1x3);
\draw (7x) -- (1x);
\draw (6x) -- (7x);
\draw (5x) -- (6x);
\draw (4x) -- (5x);
\draw (3x) -- (4x);
\draw (2x) -- (3x);
\draw (1x) -- (2x);
\draw (5x) -- (1x);
\draw (2x) -- (5x);
\draw (6x) -- (2x);
\draw (3x) -- (6x);
\draw (7x) -- (3x);
\draw (4x) -- (7x);
\draw (1x) -- (4x);
\draw (3x) -- (1x);
\draw (5x) -- (3x);
\draw (7x) -- (5x);
\draw (2x) -- (7x);
\draw (4x) -- (2x);
\draw (6x) -- (4x);
\draw (1x) -- (6x);

\end{tikzpicture}
\caption{The graph $K_4T$, the tree $T=K_{1,4}$ and an example of the graph $KP_7$.}
\label{n1}
\end{figure}
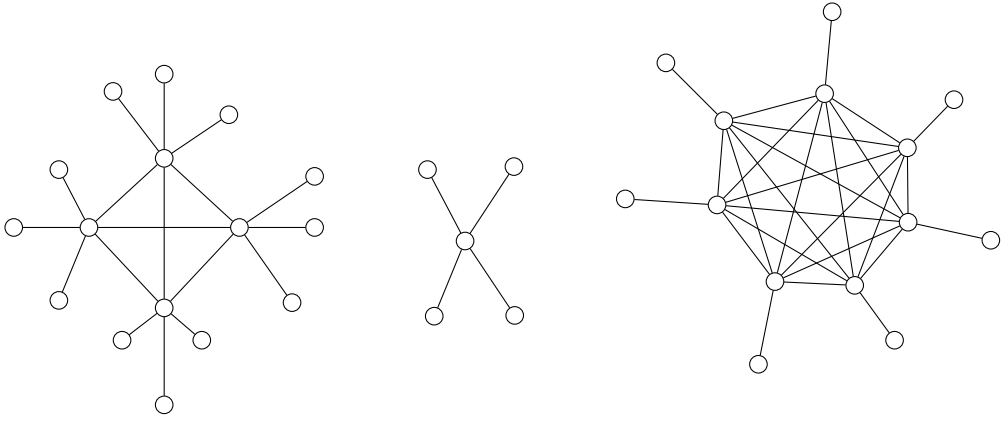

\begin{ejem}
When $T=K_{1,m}$, the graph $K_4T$ is the graph obtained from the complete graph $K_m$ by adding to each vertex, $\frac{1}{2}(m-1)(m-2)$ new neighbours as in Figure \ref{n1}.
\end{ejem}
The following proposition follows from the definition of $K_4T$.
\begin{prop}\label{hojas}
Let $v=(k,x,p,q)$ be a vertex in $K_4T$. Then the degree of $v$ is the amount of vertices in $F(T)$ which lie on direction $i$, where
\begin{itemize}
    \item  $i=0$ if $k=2$,
    \item  $i$ is such that the $i$th entry of $p$ is at least two if $ i \leq l(p)$,
    \item $i$ is such that the $(i+l(p))$th entry of $q$ is two if $i >l(p)$.
\end{itemize}
\end{prop}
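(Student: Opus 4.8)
The plan is to compute $\deg(v)$ directly from the combinatorial definition of $K_4T$: fix a non-isolated vertex $v=(k,x,p,q)$, classify the potential neighbours $w=(l,y,r,s)$ according to the position of the vertex $y$ of $F(T)$ relative to $x$, and then match the neighbours against the vertices of $F(T)$ lying on the distinguished $x$-direction of $v$. The first thing to record is a normal form for $v$. Since $p$ and $r$ each have a positive entry and the entries of $p,q$ (resp.\ of $r,s$) sum to $3-k$ (resp.\ $3-l$), we get $k,l\in\{0,1,2\}$ and every entry of $p$ and of $q$ is at most $3$. Discarding the isolated vertices of the preceding lemma, the survivors are rigid: either $k=2$, in which case $p$ has a single nonzero entry, equal to $1$, and $q=0$, and the distinguished direction is $i=0$; or $k=0$ and exactly one entry of $p$ or $q$ equals $2$ or $3$ (all others being $\le 1$), in which case $i$ is the position of that entry, read inside $p$ when $i\le l(p)$ and inside $q$ otherwise. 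This identifies the direction $i=i(v)$ in the statement; when $v$ is isolated $\deg(v)=0$ and there is no direction to speak of, so the statement is vacuous there.

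Next I would localize the neighbours. Run through the three clauses of the adjacency definition, together with the two mirror clauses that apply when $x$ lies on a positive $y$-direction — these are forced because $E(K_4T)$ is a symmetric relation. With $k,l\le 2$: clause 1 requires $k=l=2$, so it can contribute only when $i(v)=0$; and in clauses 2 and 3 the conditions $p_i>4-l$, or $p_i+l=4$ with a second nonzero entry of $p$, or $q_{i-l(p)}>4-l$, combined with $l\le 2$ and the entrywise bounds above, can only be met when the entry of $p$ or $q$ being tested is the distinguished one. Hence $(v,w)\in E(K_4T)$ forces $y$ to lie on the $x$-direction $i(v)$, so no vertex of $F(T)$ off that direction is a neighbour of $v$.

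Finally, one shows that each vertex $y$ of $F(T)$ on $x$-direction $i(v)$ contributes exactly one neighbour of $v$: from the surviving clause(s) and the normal form one reads off the unique admissible tuple $w=(l,y,r,s)$ that is adjacent to $v$, and checks that it is a genuine non-isolated vertex of $K_4T$ — here the hypothesis $d(y)\ge 3$, i.e.\ $y\in V(F(T))$, is precisely what is needed for admissibility. Together with the previous step this produces a bijection between the neighbourhood of $v$ and the set $\{\,y\in V(F(T)) : y\text{ lies on }x\text{-direction }i(v)\,\}$, which is the assertion.

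The main obstacle is the bookkeeping in the last two steps: one must keep track of the alternative ``$p_i+l=4$ with a second nonzero entry'' — which is exactly what lets a $2$ in $p$ behave like a $3$ once it has a sibling $1$ — rule out spurious adjacencies coming from the several tuples $(l,r,s)$ a priori available at a given $y$, and verify that the matching is consistent across the not-stacked, below, and above cases. The arithmetic of the normal form step is routine.
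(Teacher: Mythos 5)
The paper gives no actual proof to compare against (it simply asserts the proposition ``follows from the definition''), so your plan --- normal form for the non-isolated vertices, localization of neighbours to the distinguished direction, then a count over the vertices $y$ of $F(T)$ on that direction --- is the right shape, and your step 2 (any neighbour $(l,y,r,s)$ of $v$ must have $y$ on $x$-direction $i(v)$) does go through by the clause-by-clause check you describe. The genuine gap is in step 3, the claim that each such $y$ contributes \emph{exactly one} neighbour of $v$. Under the definition of $K_4T$ this is false as soon as $d(y)\geq 4$: for instance take $k=2$, so $i(v)=0$, and let $y$ lie on $x$-direction $0$ with $x$ and $y$ not stacked. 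Clause 1 tests only $k_1+k_2\geq 4$, so \emph{every} tuple $(2,y,r,s)$ is adjacent to $v$, and by Proposition \ref{num} there are $\frac{1}{2}(d(y)-1)(d(y)-2)$ of them --- three or more once $d(y)\geq 4$. The same multiplicity occurs in the stacked cases: all tuples at $y$ whose entry in the direction of $x$ is $3$, or is $2$ with a second nonzero entry, are adjacent to $v$. So there is no bijection between $N(v)$ and $\{y\in V(F(T)): y \text{ lies on } x\text{-direction } i(v)\}$; at best each admissible $y$ supports a block of neighbours all having the same neighbourhood (this is exactly what the paper itself uses later, where a single leaf $z$ of $F(T)$ accounts for $\frac{1}{2}(d(z)-1)(d(z)-2)$ neighbours in $K_4T$ of a vertex that becomes a leaf of $RK_4T$). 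Your ``unique admissible tuple $(l,y,r,s)$'' step is therefore the one that fails, and the argument as written only proves the statement when every vertex on the distinguished direction has degree $3$ in $T$ (e.g.\ $T$ binary), or after passing to $RK_4T$ where same-neighbourhood duplicates are identified.

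A second, smaller problem is your treatment of the case where the distinguished entry $2$ sits in $q$ (the third bullet). Clause 3 as stated requires $q_{1,i-l(p_1)}>4-k_2\geq 2$, which no vertex can satisfy since the entries of $q$ are at most $2$; so under the literal definition such vertices are isolated. But the third bullet still assigns them a direction, so for them the statement is \emph{not} vacuous as you claim --- it asserts their degree equals the number of vertices of $F(T)$ on that direction, which need not be zero. (This traces back to a discrepancy in the paper: Corollary \ref{coro} uses the adjacency of $(0,w,1,2)$ with $(2,v,1,0)$, which clause 3 as printed does not produce, so clause 3 presumably should admit the alternative $q_{1,i-l(p_1)}+k_2=4$.) A complete proof has to either work with the corrected clause or flag the inconsistency explicitly, rather than folding the $q$-case silently into the normal form.
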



\begin{coro}\label{coro}
Given a binary tree $T$, the number of leaves in $K_4T$ is the number of leaves in $F(T)$. 
\end{coro}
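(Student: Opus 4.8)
The plan is to use Proposition~\ref{hojas} as a dictionary between the vertices of $K_4T$ and the directions at the vertices of $F(T)$, and then to exhibit a bijection matching leaves on the two sides. A leaf of $K_4T$ is a vertex of degree $1$, and by Proposition~\ref{hojas}, to a non-isolated vertex $v=(k,x,p,q)$ one attaches a single $x$-direction $i_v$ --- equal to $0$ when $k=2$, and otherwise the unique direction whose coordinate in the pair $(p,q)$ is at least $2$ --- such that the degree of $v$ equals the number of vertices of $F(T)$ lying on $x$-direction $i_v$. Hence $v$ is a leaf of $K_4T$ exactly when precisely one vertex of $F(T)$ lies on $x$-direction $i_v$.

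Now bring in the hypothesis. Since $T$ is binary, $d(x)=3$ for every $x\in V(F(T))$, so $l(p)+l(q)=2$, the only $x$-directions are $0,1,2$, and $k\le 2$ because $p$ has a positive entry. By the isolated vertex lemma a non-isolated vertex has $k\neq 1$ and does not have the entries of $p$ summing to $2$, so $k\in\{0,2\}$; and when $k=0$ the two coordinates of $(p,q)$ sum to $3$, so exactly one of them is $\ge 2$ and $i_v\in\{1,2\}$ is well defined. Combining this with Proposition~\ref{num} applied with $d(x)=3$ and with Proposition~\ref{hojas}: fixing $x\in V(F(T))$, the vertices of $K_4T$ whose second entry is $x$ are in bijection with the $x$-directions $i\in\{0,1,2\}$ carrying at least one vertex of $F(T)$, and the one for direction $i$ has degree equal to the number of vertices of $F(T)$ on $x$-direction $i$, hence is a leaf exactly when that number is $1$. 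Summing over $x\in V(F(T))$,
\[
\#\{\text{leaves of }K_4T\}=\#\{(x,i):x\in V(F(T)),\ i\in\{0,1,2\},\ \text{exactly one vertex of }F(T)\text{ lies on }x\text{-direction }i\}.
\]

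It remains to identify the right-hand set with the set of leaves of $F(T)$, which I would do via the map $\phi(x,i):=$ the unique vertex $y$ of $F(T)$ lying on $x$-direction $i$. If $i\in\{1,2\}$, then $y$ lies below $x$, and uniqueness forces that no vertex of $F(T)$ lies strictly between $x$ and $y$ on the path joining them (so $x$ is the $F(T)$-parent of $y$) and that no vertex of $F(T)$ lies below $y$ (so $y$ has no $F(T)$-child); thus $y$ has a single neighbour in $F(T)$, that is, $y$ is a leaf of $F(T)$. If $i=0$, then $y$ is the only vertex of $F(T)$ not lying below $x$, which forces $x$ to be the unique $F(T)$-child of the root $r$ of $F(T)$ (the vertex of $F(T)$ closest to $\star$) and $y=r$; since $x$ is the only $F(T)$-neighbour of $r$, again $y$ is a leaf. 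Conversely, a leaf $y$ of $F(T)$ has a unique $F(T)$-neighbour $z$: if $z$ is the $F(T)$-parent of $y$, put $x=z$ and let $i$ be the direction of $z$ toward $y$; if $y$ is the root of $F(T)$ (so $z$ is its unique $F(T)$-child), put $x=z$ and $i=0$. In each case $y$ is the only vertex of $F(T)$ on the chosen direction --- because a leaf of $F(T)$ has no $F(T)$-descendants --- so $\phi(x,i)=y$; and $(x,i)$ is recovered from $y$, so $\phi$ is a bijection. This proves the corollary.

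I expect the main obstacle to be the paragraph invoking Proposition~\ref{num}: extracting from it and the isolated vertex lemma the clean statement that at every degree-$3$ vertex $x$ of $T$ there is exactly one vertex of $K_4T$ with second entry $x$ for each $x$-direction carrying a vertex of $F(T)$ --- with particular care for the $k=2$ vertex and for the small cases in which $F(T)$ has one or two vertices, so that the root of $F(T)$ may itself be a leaf and is produced via direction $0$ at its $F(T)$-child. This is also the only point where the hypothesis that $T$ is binary enters: at a vertex of degree at least $4$, Proposition~\ref{num} gives $\binom{d(x)-1}{2}>1$ vertices of $K_4T$ per direction and the equality would fail. The bijection $\phi$ is then a routine, if slightly case-heavy, analysis of the path in $T$ joining $x$ and $\phi(x,i)$.
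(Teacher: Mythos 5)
Your proposal is correct, and it is best described as a two-sided refinement of the paper's argument rather than a different method: both rest on Proposition \ref{hojas}, Proposition \ref{num} and the isolated-vertex lemma. The difference is in completeness. The paper's proof goes only one way: for each leaf $w$ of $F(T)$ with neighbour $v$ it exhibits a single leaf of $K_4T$ together with its unique neighbour, and it never verifies that $K_4T$ has no further leaves; you instead classify \emph{all} leaves of $K_4T$ --- after discarding isolated vertices there is, for each $x\in V(F(T))$ and each direction $i\in\{0,1,2\}$, exactly one candidate vertex (the $k=2$ vertex for $i=0$, the $\Lambda_i$-vertex otherwise), whose degree is the number of $F(T)$-vertices in direction $i$ --- and then give the explicit bijection $\phi$ with the leaves of $F(T)$, including the root case. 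This supplies the reverse inequality that the printed proof leaves implicit. Two small remarks: your sentence asserting that the vertices of $K_4T$ based at $x$ are in bijection with the directions carrying an $F(T)$-vertex should be read as referring to the non-isolated vertices only (your appeal to the isolated-vertex lemma already covers this, and isolated vertices cannot contribute leaves in any case); and your bookkeeping, in which the $K_4T$-leaf attached to an $F(T)$-leaf $y$ with parent $x$ is $(0,x,3,0)$ or $(0,x,1,2)$ with unique neighbour $(2,y,1,0)$, is the one consistent with Proposition \ref{hojas} and with the adjacencies used in the proof of Lemma \ref{iso}, whereas the tuples displayed in the paper's own proof of the corollary interchange the roles of $w$ and $v$; so your version is, if anything, the cleaner one.
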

\begin{proof}
Let $w$ be a leaf in $F(T)$ different from the root, and let $v$ be its unique neighbour.
Assume $w$ lies on $v$-direction $i$. Then the vertex $u$ is a leaf in $K_4T$ and has as unique neighbour the vertex $(2,v,1,0)$, where 
$u=(0,w,p,q)$ is such that
$u=\left\{ \begin{array}{cc}
  (0,w,3,0)   & \mbox{ if } i=1  \\
    (0,w,1,2)  & \mbox{ if } i=2. \\
\end{array}\right.$
If $w$ is the root vertex with unique neighbour $v$, then in $K_4T$ the vertex $(2,v,1,0)$ is a leaf and its unique neighbour is $(0,w,3,0).$
\end{proof}

To simplify notation, we shall often write $0$ to denote the vector having every entry zero. Let $L(G)$ denote the set of leaves of a graph $G$.
\begin{prop}\label{vec}
If two vertices $u$ and $v$ in $K_4T$ have the same neighbourhood then $u=(k,x,p,q)$ and $v=(l,x,r,s)$.
\end{prop}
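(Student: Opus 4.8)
The plan is to recover the tree-vertex $x$ from $N(u)$ by looking only at which tree-vertices of $F(T)$ support the neighbours of $u$. Following the paper I discard isolated vertices, so $u$ and $v$ are non-isolated and $N(u)=N(v)$ is nonempty.

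First I would sharpen the isolated-vertex lemma into a classification. Since $p$ has a positive entry, the entries of $(p,q)$ sum to $3-k\ge1$, hence $k\le2$, and $k\ne1$ by the lemma. Re-running the four-case argument of that lemma then shows that a non-isolated vertex is either $(2,x,p,0)$ with $p$ having a single nonzero entry (equal to $1$), or $(0,x,p,0)$ with $p$ having exactly one entry equal to $2$ or $3$. So each non-isolated vertex carries a well-defined \emph{large direction} $a\in\{0,1,\dots,d(x)-1\}$: namely $a=0$ when $k=2$, and otherwise $a$ is the position of the big entry of $p$ (necessarily $a\le l(p)$). This is precisely the direction appearing in Proposition \ref{hojas}.

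Next, for a non-isolated $u=(k_1,x_1,p_1,q_1)$ I would determine the set $M(u):=\{\,y\in V(F(T)):\ (l,y,r,s)\in N(u)\ \text{for some }l,r,s\,\}$. Running through conditions (1)--(3) of the definition of $K_4T$, applied in both orders since the graph is undirected, yields two clean descriptions. If $k_1=0$, a vertex $w=(l,y,r,s)$ is adjacent to $u$ exactly when $l=2$ and $y$ lies on $x_1$-direction $a_1$; hence $M(u)$ is the set of $F(T)$-vertices lying in the $a_1$-branch at $x_1$, i.e.\ the proper descendants of $x_1$ reached through its $a_1$-edge. If $k_1=2$, then $w$ is adjacent to $u$ either when $l=2$ and $x_1,y$ are not stacked, or when $x_1$ lies on $y$-direction $i\ge1$ and the $i$-th coordinate of $(r,s)$ is ``super-large'' (at least $3$, or equal to $2$ inside $r$ together with another nonzero $r$-entry); since such a $w$ always exists (take $w=(0,y,r,0)$ with $r_i=3$), $M(u)$ is the set of $F(T)$-vertices that are not descendants of $x_1$, where a vertex counts as a descendant of itself. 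I expect this computation of $M(u)$ to be the main technical step; two points require care, namely that no two vertices with the same tree-vertex are ever adjacent (neither ``not stacked'' nor ``one lies on a nonzero direction of the other'' can hold when the tree-vertices coincide), and that every tree-vertex claimed to belong to $M(u)$ is realised by an admissible $4$-tuple under the vector-length constraints.

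Finally I would conclude by a short case analysis on $(k_1,k_2)\in\{0,2\}^2$, the cases $(0,2)$ and $(2,0)$ being symmetric, reaching a contradiction from $M(u)=M(v)$ whenever $x_1\ne x_2$. If $k_1=k_2=2$: then $V(F(T))\setminus M(u)$ and $V(F(T))\setminus M(v)$ are the inclusive descendant sets of $x_1$ and of $x_2$, and each of these sets contains its own apex, so $x_1$ is a descendant of $x_2$ and $x_2$ is a descendant of $x_1$, forcing $x_1=x_2$. If $k_1=k_2=0$: pick $y_0\in M(u)=M(v)$; both $x_1$ and $x_2$ are proper ancestors of $y_0$, hence comparable, so we may assume $x_1$ is a strict descendant of $x_2$; then $x_1$ lies on the path from $x_2$ to $y_0$ and therefore in the $a_2$-branch of $x_2$, so $x_1\in M(v)$, whereas $x_1\notin M(u)$ since a branch never contains its own apex---a contradiction. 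If $k_1=0$ and $k_2=2$: then $x_1\notin M(u)=M(v)$ forces $x_1$ to be a descendant of $x_2$, so any $y_0\in M(u)$ is a proper descendant of $x_1$, hence a descendant of $x_2$, contradicting $y_0\in M(v)$. Therefore $x_1=x_2$ in all cases.
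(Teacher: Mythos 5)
Your argument is correct, and it is organized differently from the paper's. The paper proves the statement by contradiction with a case split on whether $x$ and $y$ are stacked, and in each case exhibits a specific witness vertex (a neighbour $(2,z,p',q')$ of the $k=0$ vertex, a vertex supported at the root of $F(T)$, or at the unique neighbour of a leaf) that must lie in one neighbourhood but not the other. You instead first classify the non-isolated vertices (forcing $k\in\{0,2\}$ and a well-defined ``large direction''), then compute the full shadow $M(u)\subseteq V(F(T))$ of the neighbourhood -- the proper descendants of $x_1$ through the large direction when $k_1=0$, and the complement of the inclusive descendant set of $x_1$ when $k_1=2$ -- and finish by comparing descendant sets in the four cases on $(k_1,k_2)$. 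Your computation of $M(u)$ checks out (including the two points you flag: vertices supported at the same tree-vertex are never adjacent, and admissible tuples realizing each claimed neighbour exist), and the concluding case analysis is sound; note that, like the paper, you are implicitly reading the proposition with the hypothesis that the common neighbourhood is nonempty. What your route buys is a complete, reusable description of neighbourhoods consistent with Proposition \ref{hojas}, and it closes a point the paper treats tersely: in the non-stacked case the paper's one-line contradiction does not address the possibility that the other vertex has $k$-value $2$, whereas in your setup this is excluded because $x_1$ itself lies in $M(v)$ but never in $M(u)$. The cost is length: the paper's witness-based argument reaches the contradiction faster.
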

\begin{proof}
Assume $u=(k,x,p,q)$ and $v=(l,y,r,s)$ have the same (non empty) neighbourhood for $x \neq y$. Notice first that $u$ and $v$ are not adjacent since $K_4T$ has no loops. Notice also that $u$ and $v$ can not be both leaves, since that would mean that $u=(2,x,p,0)$ and $v=(2,y,r,0)$ thus $(u,v) \in E(K_4T)$, a contradiction.

 Assume first that $x$ and $ y$ are not stacked. This means that either $ k$ or $l$ is different that 2. Assume without loss of generality that $k\neq 2$, thus $k=0$. Since $N(u) \neq \emptyset$, there exists $w \in N(u)$, $w=(2, z, p',q')$ and $z$ lies on $x$-direction $i$. This is a contradiction since $w \in N(v)$ and the vertices $x$ and $y$ are not stacked. 
 
Now we may assume that the vertices $x$ and $y$ are stacked, and assume without loss of generality that $y$ lies on $x$-direction $i$. This means that $k=0$ and $l=2$. If $x$ is not a leaf in $F(T)$, the root vertex $z$ of $F(T)$ is such that $(0,z,p',q')$ is adjacent to $v$ for some vectors $p'$ and $q'$. But this means that $(0,z,p',q')$ is also adjacent to $u=(0,x,p,q)$ which is a contradiction.
If $x$ is a leaf, since we are assuming that $y$ lies on $x$-direction $i$, $x$  must be the root vertex and $u=(0,x,p,0)$.
Then $v=(2,y,r,0)$, and $w=(2,z,p',0)\in N(u)$ with $z$ the unique neighbour of $x$.  Since $z$ and $y$ are stacked, $w\notin N(v)$ which is a contradiction.
Hence $x=y$.
\end{proof}

\begin{coro}
If $T$ is a binary tree, then there are no two vertices having the same non empty neighbourhood in $K_4T$.
\end{coro}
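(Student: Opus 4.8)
The plan is to start from Proposition~\ref{vec}, which tells us that two vertices of $K_4T$ with the same nonempty neighbourhood are attached to the same vertex $x$ of $T$; write them as $u=(k,x,p,q)$ and $v=(l,x,r,s)$. Since $T$ is binary, $d(x)=3$, so $l(p)+l(q)=2$; as $p$ has a positive entry and $l(q)\ge1$ (cf. the proof of Proposition~\ref{num}, where $l(p)$ ranges between $1$ and $d(x)-2$), we get $l(p)=l(q)=1$. Thus $p=(p_1)$, $q=(q_1)$, $r=(r_1)$, $s=(s_1)$ with $p_1,r_1\ge1$. Because $N(u)=N(v)$ is nonempty, neither vertex is isolated, so by the lemma on isolated vertices $k,l\in\{0,2\}$ and $p_1,r_1\ne2$. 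It will then suffice to show that $(k,p,q)$ is forced, contradicting $u\ne v$.

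The key device is a \emph{direction} $\delta\in\{0,1,2\}$ attached to every non-isolated vertex $(k,x,p,q)$ with $d(x)=3$: put $\delta=0$ if $k=2$, $\delta=1$ if $p_1\ge2$, $\delta=2$ if $q_1\ge2$. Since the entries of $p$ and $q$ sum to $3-k\le3$, exactly one of these occurs. Reading off the three clauses of the adjacency relation, I would check two facts: (i) no two vertices attached to the same $x$ are adjacent (none of the clauses applies when $x_1=x_2$), so every neighbour of $(k,x,p,q)$ is attached to a vertex $y\ne x$; and (ii) any such neighbour $(k',y,\dots)$ has $y$ lying on $x$-direction $\delta$. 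For (ii): if $\delta=0$, then using $k'\le2$ the only surviving clauses force $y$ to be not stacked with $x$ or to be an ancestor of $x$ in $F(T)$, in both cases on $x$-direction $0$; if $\delta=i\ge1$, then $k=0$ kills the ``not stacked'' clause and the ``$x$ is the descendant'' clause, and the remaining clause places $y$ in the $i$-th branch at $x$.

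Now distinct $x$-directions contain disjoint sets of vertices of $F(T)$, so since $N(u)=N(v)\ne\emptyset$, any common neighbour forces $\delta(u)=\delta(v)=:\delta$. If $\delta=0$, then $k=l=2$, and $p_1+q_1=1=r_1+s_1$ with $p_1,r_1\ge1$ gives $p=r=(1)$, $q=s=(0)$. If $\delta=1$, then $k=l=0$, $p_1,r_1\ge2$, and $p_1+q_1=3=r_1+s_1$ with $p_1,r_1\ne2$ gives $p=r=(3)$, $q=s=(0)$. If $\delta=2$, then $k=l=0$, $q_1,s_1\ge2$, $p_1,r_1\ge1$, forcing $p=r=(1)$, $q=s=(2)$ (in fact one checks that such a vertex is isolated, so this case cannot arise at all). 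In every case $u=v$, which is the desired contradiction.

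I expect fact (ii) to be the only subtle point: it requires a careful pass through the inequalities $p_{1,i}>4-k_2$, $p_{1,i}+k_2=4$ and $q_{1,i-l(p_1)}>4-k_2$, together with their symmetric versions when $x$ plays the role of the descendant vertex, using throughout that $k,k'\le2$ and that all entries involved are at most $3$. After that, the remainder is the short finite case check above, and — unlike the earlier corollaries — Proposition~\ref{hojas} is not needed.
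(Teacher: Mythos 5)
Your proof is correct, and it rests on the same two pillars as the paper's argument: Proposition~\ref{vec} to force a common base vertex $x$, and the fact that the three non-isolated classes at a degree-three vertex, $(2,x,1,0)$, $(0,x,3,0)$ and $(0,x,1,2)$, have their neighbours confined to pairwise distinct $x$-directions. The difference is in how the second ingredient is used. The paper runs a case-by-case check: for each pair of distinct candidates it exhibits a vertex adjacent to one but not the other, and concludes that equal neighbourhoods would force both neighbourhoods to be empty. You instead isolate the direction-localization statement (your fact (ii), which is essentially the content of Proposition~\ref{hojas}, re-derived from the adjacency clauses) as a lemma, and then let a single common neighbour force $\delta(u)=\delta(v)$, after which the finite arithmetic check gives $u=v$. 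Your route is more uniform and uses the non-emptiness hypothesis directly, whereas the paper's witness construction proves the slightly stronger assertion that two distinct classes at the same $x$ can only share an empty neighbourhood; both are legitimate. Two small caveats. First, your appeal to $l(q)\geq 1$ is indeed only justified by the convention visible in the proof of Proposition~\ref{num} (and in the list of classes used in Corollary~\ref{coro}); it is needed, since if splittings with $l(q)=0$ were admitted, $\delta$ would no longer determine the vertex and the statement itself would be in jeopardy, so it is right that you flagged the source. Second, your parenthetical claim that $(0,x,1,2)$ is isolated does follow from the strict inequality in clause~3 of the definition of $K_4T$ as literally stated, but it conflicts with Proposition~\ref{hojas} and Corollary~\ref{coro}, which attach to such a vertex one neighbour for each vertex of $F(T)$ in the corresponding direction; this is a discrepancy in the paper's stated adjacency rules rather than a flaw in your argument, and under either reading all neighbours of $(0,x,1,2)$ still lie in a single $x$-direction distinct from those of the other two types, so your forcing step, and hence the proof, goes through.
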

\begin{proof}
Assume $v$ and $w$ have the same neighbourhood. By Proposition \ref{vec}, we must have that $v,w \in \{(0,x,1,2),(0,x,3,0),(2,x,1,0)\}$.
If $ v= (2,x,1,0)$ and $w \neq v$, then for every vertex $y$ such that $y$ lies on $x$-direction 0, we have that if $x$ lies on $y$-direction $i$, then $u \in N(v)$ where $u=\left\{ \begin{array}{cc}
 (2,y,1,0) & \mbox{ if } i=0 \\
  (0,y,3,0)   & \mbox{ if } i=1  \\
    (0,y,1,2)  & \mbox{ if } i=2. \\
\end{array}\right.$ But, since $w \neq v$, we have that $u$ can not be adjacent to $w$. This means that there are no vertices lying on $x$-direction 0, but this implies that $N(v) = \emptyset$. So assume that $ v= (0,x,1,2)$ and $ w=(0,x,3,0)$. Then again, if there exist vertices $y,z \in T $ such that $y$ lies on $x$-direction 1 and $z$ lies on $x$-direction two, the vertex $(2,y,1,0)$ is adjacent to $v$ but not  to $w$ and the vertex $(2,z,1,0)$ is adjacent to $w$ but not to $v$. This means that $x$ is a leaf in $F(T)$, but this implies that $N(v)= \emptyset = N(w)$.
\end{proof}
\section{Recovering the tree $T$}
We shall first consider the case when $T$ is a binary tree before considering the general case, since this case is easier both notation-wise and mathematically.
\subsection{Binary trees}
Throughout this section we shall assume that $T$ is a binary tree. The main reason for this is Proposition \ref{vec}.
If $T$ is a binary tree, then $T$ is completely determined by $F(T)$.
 \begin{prop}
Assume $T$ is a binary tree and let $K_m$ be a complete subgraph of $K_4T$ for $m\geq 3$. Then in $F(T)$ there exists an independent set $I$ of $m$ vertices. 
\end{prop}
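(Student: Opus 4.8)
The plan is to show that an $m$-clique in $K_4T$ forces $m$ pairwise non-adjacent vertices of $F(T)$, by first extracting, for each clique vertex, a single "relevant direction" at its base vertex of $F(T)$, and then reading off from the edge conditions in the definition of $K_4T$ what the mutual position of these directions must be in the tree. First I would set up notation: write the clique as $v_1,\dots,v_m$ with $v_t=(k_t,x_t,p_t,q_t)$, and discard the isolated vertices (by the Lemma, this means every $v_t$ has $k_t\in\{0,2\}$ and, when $k_t=0$, the $p_t$-entries do not sum to $2$). Since $T$ is binary, each $x_t$ has degree exactly $3$, so $l(p_t)+l(q_t)=2$ and the vectors are short; in fact the only non-isolated vertices are of the three shapes $(2,x,1,0)$, $(0,x,3,0)$, $(0,x,1,2)$, $(0,x,(1,2),0)$, $(0,x,(2,1),0)$ (and their obvious variants). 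For each $v_t$ define its \emph{witness direction} $d_t\in\{0,1,2\}$ at $x_t$: namely $d_t=0$ if $k_t=2$, and otherwise $d_t$ is an index where $p_t$ (or $q_t$, shifted) has an entry $\geq 2$; by Proposition \ref{num} / Proposition \ref{hojas} such a direction exists for every non-isolated vertex and governs all the edges incident to $v_t$.

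Next I would analyse pairwise adjacency in terms of these witness directions. Take $v_s,v_t$ adjacent, $s\neq t$. If $x_s$ and $x_t$ are not stacked, then case (1) of the definition forces $k_s+k_t\geq 4$, i.e. $k_s=k_t=2$, so $d_s=d_t=0$; in the tree this says the subtrees hanging from $x_s$ and $x_t$ in the non-zero directions are disjoint — I will record the $F(T)$-vertex lying "above" in direction $0$. If $x_s$ and $x_t$ are stacked, say $x_t$ lies on $x_s$-direction $i$, then cases (2)–(3) force the $i$th coordinate of $(p_s,q_s)$ to be large relative to $k_t$; since the entries are bounded by $2$ (as $T$ is binary and we removed the isolated vertices), the only way the inequality $p_{s,i}>4-k_t$ or the equality-with-witness clause can hold is $k_t=2$ together with $p_{s,i}\geq 2$. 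Thus $d_t=0$ and $i=d_s$: the witness direction of $v_s$ points \emph{exactly toward} $x_t$. Symmetrically $d_s$ then points toward $x_t$ and $d_t$ points toward $x_s$'s side, so the two witness directions are "aimed at each other."

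The core of the argument is then: map $v_t$ to the vertex $y_t$ of $F(T)$ obtained by starting at $x_t$ and moving one step in direction $d_t$ (this is well-defined because, by the degree conditions coming from Proposition \ref{hojas}, direction $d_t$ at $x_t$ contains at least one vertex of $F(T)$, and for a binary tree the "first" such vertex is canonical). I would check that $v_t\mapsto y_t$ is injective on the clique — if $y_s=y_t$ with $x_s\neq x_t$ then $x_s,x_t$ are two distinct neighbours of $y_t$ in $F(T)$ both pointing at $y_t$, and one computes directly that $v_s,v_t$ cannot be adjacent (their witness directions point away from each other), contradicting the clique assumption; if $x_s=x_t$ then $v_s,v_t$ have the same base vertex and same witness direction, and among the $\leq 5$ shapes above the two with equal witness direction are never adjacent, again a contradiction. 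Finally I would verify $\{y_1,\dots,y_m\}$ is independent in $F(T)$: if $y_s y_t$ were an edge of $F(T)$, then in $T$ the path from $x_s$ out direction $d_s$ reaches $y_s$, then one edge to $y_t$, then back to $x_t$; tracing this shows the witness directions are \emph{not} aimed at each other, so by the case analysis of the previous paragraph $v_s$ and $v_t$ are non-adjacent in $K_4T$ — contradiction. Hence $I=\{y_1,\dots,y_m\}$ is the desired independent set of size $m$.

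The main obstacle I anticipate is the bookkeeping in the "aimed at each other" dichotomy: making precise, uniformly across cases (1)–(3) and across the two sub-cases "stacked / not stacked", the statement that adjacency in $K_4T$ is equivalent to the two witness directions pointing toward one another, and then showing that an edge $y_sy_t$ in $F(T)$ is incompatible with that. This is where one must be careful that moving "one step in direction $d_t$" lands on the right $F(T)$-vertex and that the inequalities in the definition (with their $>4-k$ versus $=4$ exceptional clause) are used correctly; everything else is a finite check over the short list of non-isolated vertex shapes available in the binary case.
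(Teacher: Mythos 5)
Your setup and your pairwise adjacency analysis are essentially correct and agree with the paper: in a clique every vertex has $k\in\{0,2\}$, each has a single witness direction, and two clique vertices with stacked base vertices must have their witness directions ``aimed at each other'' (the lower one has an entry $\geq 2$ pointing at the upper one, and the upper one has $k=2$). The gap is in the third paragraph, where you pass from $v_t$ to the $F(T)$-vertex $y_t$ one step away in the witness direction. That map is \emph{not} injective on a clique, and the injectivity argument you sketch is backwards: if $y_s=y_t=y$ with $x_s\neq x_t$ both $F(T)$-neighbours of $y$ pointing at $y$, their witness directions point \emph{toward} each other through $y$ (the first edge of the $x_s x_t$-path out of $x_s$ is the edge toward $y$, i.e.\ the edge in direction $d_s$, and likewise for $x_t$), which is exactly the configuration in which $v_s$ and $v_t$ \emph{are} adjacent. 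Concretely, let $T$ be a binary tree with $F(T)$ equal to the star with center $y$ and leaves $x_s, x_t, x_u$, where $x_s$ is the root of $F(T)$ and $x_t, x_u$ lie above $y$. Then $v_s=(0,x_s,3,0)$, $v_t=(2,x_t,1,0)$, $v_u=(2,x_u,1,0)$ form a triangle in $K_4T$, but $d_s=1$ and $d_t=d_u=0$, so $y_s=y_t=y_u=y$ and your candidate set $\{y_s,y_t,y_u\}$ is a single vertex, not an independent set of size $3$.

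The repair is to not move at all: the base vertices $I=\{x_1,\dots,x_m\}$ themselves form the independent set, which is what the paper does. Your own adjacency analysis already gives the argument. If some $x_j$ lies on $x_i$-direction $t\in\{1,2\}$, then $v_i$ must be the unique clique member with $k=0$ (its witness entry in direction $t$ is $\geq 2$) and every other clique member $v_l$ has $k_l=2$ with all vector entries equal to $0$ or $1$; hence no two of the $x_l$ with $l\neq i$ can be stacked (a stacked pair would require an entry $\geq 2$ at the lower vertex), so they are pairwise non-adjacent in $F(T)$, and all of them lie on $x_i$-direction $t$. If moreover $x_i$ were $F(T)$-adjacent to some $x_j$, then $x_j$ would be the first essential vertex above $x_i$ in direction $t$, forcing every other $x_l$ (which exists since $m\geq 3$) to lie above $x_j$, i.e.\ stacked with $x_j$ --- a contradiction. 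If no pair is stacked, the $x_i$ are already pairwise non-adjacent in $F(T)$. Either way $I$ has $m$ elements and is independent.
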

\begin{proof}
Let $v_i=(k_i, x_i, p_i, q_i)$ for $1\leq i \leq m$ be the vertices of $K_m$ in $K_4T$. Assume that $x_j$ lies on $x_i$-direction $t\in \{1,2\}$  for some $i\neq j$. Then $v_j=(2,x_j,1,0)$ and $v_i=(0,x_i,3,0)$ or $v_i=(0,x_i,1,2)$ (if $t=1$ or $t=2$ respectively). Since $v_i$ multiplies every vertex $v_l$, this means that $x_l$ lies on $x_i$-direction $t$ for every $l \in\{ 1, \dots,m\} \setminus \{i\} $. 
Recall that $v_j$, and hence every $v_l$, must be of the form $(2, x_l,1,0)$ for $l \in\{ 1, \dots,m\} \setminus \{i\} $. This means that $x_l$ lies on $x_j$-direction $0$ and vice versa $x_j$ lies on $x_l$-direction $0$ for $j,l \in\{ 1, \dots,m\} \setminus \{i\} $. This immediately implies that $\{x_1, \dots,x_m\} \setminus \{x_i\} $ is an independent set of $m-1$ vertices. Now assume the edge $(x_i, x_j)$ exists in $F(T)$ for some $j \in \{ 1, \dots,m\} \setminus \{i\} $. Then $x_j$ lies on $x_i$-direction $0$ but this implies that $x_i$ also lies on $x_l$-direction  for $l \in\{ 1, \dots,m\} \setminus \{i,j\} $ which is a contradiction. Hence $I =\{x_1, \dots, x_m\}$ is an independent set in $F(T)$.
\end{proof}

\begin{coro}\label{kp}
Assume that $T$ is a binary tree, and that $F(T)$ has $m$ leaves. Then there exists in $K_4T$ a subgraph isomorphic to $K_m$ such that each vertex in $K_m$ is adjacent to exactly one leaf. 
\end{coro}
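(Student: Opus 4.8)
The plan is to produce the copy of $K_m$ explicitly, building on the proof of Corollary~\ref{coro}. The basic point is that, apart from the root, the leaves of $F(T)$ form an antichain: a leaf of $F(T)$ other than the root has no descendants in $F(T)$ at all, so no two such leaves $w,w'$ are stacked in $T$. Hence, for \emph{any} admissible choice of vectors, the tuples $v_w:=(2,w,p_w,q_w)$ indexed by the non-root leaves of $F(T)$ are pairwise adjacent by adjacency rule~(1) (their first coordinates sum to $4$ and their base vertices are not stacked); if the root $r$ of $F(T)$ is itself a leaf, the vertex associated to it in Corollary~\ref{coro} is one of those first-coordinate-$0$ tuples carrying a maximal entry $3$, and it is adjacent to each $v_w$ through exactly the rule~(2)/(3) incidence used in the root case of Corollary~\ref{coro}. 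Either way $K:=\{v_w:w\in L(F(T))\}$ spans a copy of $K_m$; the whole problem is to choose the vectors $p_w,q_w$ so that each $v_w$ is adjacent to exactly one leaf of $K_4T$.

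By Corollary~\ref{coro}, $K_4T$ has exactly $m$ leaves, one associated through that proof to each leaf $w$ of $F(T)$; call it $\ell_w$, and recall that $\ell_w$ and its unique neighbour are written down explicitly there. Since $L(K_4T)=\{\ell_{w'}:w'\in L(F(T))\}$ and each $\ell_{w'}$ is a leaf, $v_w$ is adjacent to $\ell_{w'}$ only when $v_w$ is precisely the unique neighbour of $\ell_{w'}$; therefore "each $v_w$ is adjacent to exactly one leaf of $K_4T$" is equivalent to the two conditions: $v_w$ is adjacent to $\ell_w$, and the tuples $v_w$ are pairwise distinct. The second condition follows from Proposition~\ref{vec} and its corollary, which hold because $T$ is binary and forbid distinct $4$-tuples from having the same (non-empty) neighbourhood; the first is where the explicit shapes from Corollary~\ref{coro} are used, choosing each $p_w$ to carry a maximal entry $3$ in the direction along which $w$ hangs from its $F(T)$-neighbour.

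The step I expect to be the main obstacle is checking that first condition coherently across all $w$ at once. Using adjacency rules~(1)--(3) and the degree formula of Proposition~\ref{hojas}, one must verify that with the chosen vectors $v_w$ is adjacent to $\ell_w$ (a rule~(2)/(3) edge in which that entry $3$ meets a first coordinate $2$) and is adjacent to no other leaf of $K_4T$ — which reduces to a statement about the positions of the leaves of $F(T)$ relative to their ancestors. The binary hypothesis does the real work: it forces every essential vertex of $T$ to have only two directions away from the root, keeping the vectors short and the pattern of leaves rigid enough that a single consistent choice of the $p_w,q_w$ makes all these incidences come out correctly. Once this bookkeeping is complete, $K$ is the desired copy of $K_m$ and $\{\ell_w:w\in L(F(T))\}$ its set of private leaves, proving the corollary.
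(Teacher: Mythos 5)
The paper states Corollary~\ref{kp} without any proof, so your proposal has to stand on its own, and as written it does not: the decisive step is missing. Your clique is the right candidate --- the tuples $(2,w,1,0)$ over the non-root leaves $w$ of $F(T)$ are indeed pairwise adjacent by rule~(1), since non-root leaves of $F(T)$ have no essential descendants and are therefore pairwise non-stacked, and when the root of $F(T)$ is itself a leaf one adds a $k=0$ tuple at the root with its large entry in the unique root-direction containing vertices of $F(T)$. But the actual content of the corollary is the clause ``each clique vertex is adjacent to exactly one leaf of $K_4T$'', and you explicitly postpone this as ``bookkeeping'': nowhere do you verify from rules~(1)--(3) that $v_w$ is adjacent to $\ell_w$ and to no other leaf. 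Moreover, the hints you give about how that verification would go do not cohere. You cannot ``choose each $p_w$ to carry a maximal entry $3$'': for the clique vertices $(2,w,p_w,q_w)$ the entries sum to $3-k=1$, so the entry $3$ can only sit in the $k=0$ tuple at the other end of the edge. And the explicit formulas you ``recall'' from Corollary~\ref{coro} work against you rather than for you: there, the unique neighbour of the leaf $\ell_w$ attached to a non-root leaf $w$ is written as $(2,v,1,0)$ with $v$ the $F(T)$-neighbour of $w$ --- a vertex based at the parent, not your clique vertex $(2,w,1,0)$ --- and since distinct leaves of $F(T)$ may share the same neighbour $v$, taking those formulas at face value would even make one vertex adjacent to several leaves. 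So the key incidence cannot be obtained by citation; it is exactly the point at which the proof has to be done by hand.

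What the missing step requires is a direct computation with the adjacency rules (equivalently, with Proposition~\ref{hojas}): a tuple based at $x$ whose large entry lies in direction $i$ is adjacent precisely to the $k=2$ tuples based at vertices of $F(T)$ lying on $x$-direction $i$, while $(2,x,1,0)$ is adjacent to the $k=2$ tuples at non-stacked vertices and to the direction tuples of ancestors pointing at $x$. From this one checks that for a non-root leaf $w$ with $F(T)$-neighbour $v$ and $w$ on $v$-direction $i$, the vertex $w$ is the \emph{only} vertex of $F(T)$ on $v$-direction $i$, so the direction-$i$ tuple at $v$ is a leaf of $K_4T$ whose unique neighbour is $(2,w,1,0)$; and that every other neighbour of $(2,w,1,0)$ (a direction tuple at a higher ancestor, whose direction also contains $v$, or a $k=2$ tuple at a non-stacked vertex, which is also adjacent to a tuple coming from the root) has degree at least two, hence is not a leaf; plus the analogous check at the root. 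Carrying this out will also force you to confront the fact that the displayed formulas in the non-root case of Corollary~\ref{coro} have the roles of $w$ and $v$ interchanged relative to rules~(2)--(3) and Proposition~\ref{hojas}; leaning on them uncritically, as your reduction does, is precisely what leaves the gap. Until that verification is written out, the proposal establishes the clique but not the ``exactly one leaf'' property, i.e.\ not the corollary.
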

Let $KP_m$ be the subgraph obtained by a attaching a leaf to each vertex of a complete subgraph $K_m$ (see Figure \ref{n1} (right)).

Let $Kp$ denote the subgraph of $K_4T$ consisting of the complete subgraph mentioned in Corollary \ref{kp} together with the adjacent leaves.  If we delete from $K_4T$ the subgraph $Kp$, we obtain the subgraph of $K_4T$ consisting only of vertices $(k,x,p,q)$ such that $x$ is not a leaf of $F(T)$. Moreover, the amount of leaves in $K_4T-Kp$ corresponds to the amount of leaves in $F(T)-L(F(T))$ and we can apply Corollary \ref{kp} again.

This means that if $S_1$ be the subgraph of $K_4T$ isomorphic to $KP_{m_1}$ for some $m_1$, then $K_4T-S_1$ contains $S_2$, a subgraph isomorphic to $KP_{m_2}$. We can continue this way  until $K_4T-\cup_{i=1}^nS_i$ is isomorphic to either a $KP_3$, a $KP_2=P_4$ or a $KP_1=K_2$ (since $T$ is binary), and notice that $m_1 \geq m_2 \geq \dots \geq m_n$. 

\begin{defi}
Let $V(K_4T)= \cup_{i=1}^n S_i$ such that the induced subgraph generated by $S_i$ is isomorphic to $KP_{m_i}$ as in the previous discussion. 
    We say that a vertex $v \in V(K_4T)$ belongs to level $i$ if it is a vertex of $S_i$. Moreover, $V(S_i)=L_i \cup R_i$ where $L_i$ is the set of leaves of $S_i$.
\end{defi}

We are now ready to recover the tree $T$ from $K_4T$ when $T$ is a binary tree. First, notice that $V(K_4T)= \cup_{i=1}^nV(S_i)$ and since each $S_i$ has $2m_i$ vertices, we are going to label each vertex of $L_i$ and $R_i$ with the labels $\{x_1^i, \dots, x_{m_i}^i\}$ and $\{y_1^i, \dots, y_{m_i}^i\}$ respectively, thus in $S_i$, $y_j^i$ is the only neighbour of $x_j^i$. We are going to construct a tree $F_T$ isomorphic to $F(T).$

\begin{defi} The tree $F_T$ is defined as follows:

   \begin{itemize}
   
       \item The vertices of $F_T$ are $V(F_T)=z_0 \cup \left( \bigcup_{i=1}^n \bigcup_{j=1}^{m_i} z_j^i\right)$, and
       \item $N(z_0)= \bigcup_{j=1}^{m_n}z_j^n$
       \item $(z_j^i,z_l^k)\in E(F_T)$ if $k=\mbox{min}\{m>i:(x_l^m, y_j^i)\in E(K_4T)\}.$
   \end{itemize} 
\end{defi}
We now must prove that this tree is indeed isomorphic to $F(T)$.
\begin{lema}\label{iso}
The trees $F(T)$ and $F_T$ are isomorphic.
\end{lema}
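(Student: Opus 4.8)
The plan is to construct an explicit bijection $\varphi\colon V(F_T)\to V(F(T))$ and verify it is a graph isomorphism. The natural candidate sends $z_0$ to the root of $F(T)$, and sends $z_j^i$ to the vertex $x\in V(F(T))$ which "owns" the vertices of $S_i$ carrying superscript $i$ and index $j$; that is, the vertex $x$ such that $x_j^i=(2,x,1,0)$ and $y_j^i$ is the leaf $(0,x,3,0)$ or $(0,x,1,2)$ attached to it inside $S_i$. I would first pin down, using Corollary~\ref{coro} and the discussion preceding the definition of levels, that deleting $S_1\cup\cdots\cup S_{i-1}$ from $K_4T$ leaves exactly the vertices $(k,x,p,q)$ whose owner $x$ is at distance $\geq i-1$ from the leaves of $F(T)$ (equivalently, $x$ survives $i-1$ iterations of leaf-removal on $F(T)$), and that the leaves $L_i$ of $S_i$ are precisely indexed by the leaves of $F^{(i-1)}(T):=F(T)$ with its first $i-1$ layers of leaves removed. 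This identifies "level $i$ in $K_4T$" with "$i$-th leaf-layer of $F(T)$", so $\varphi$ is a well-defined bijection onto $V(F(T))$.

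The second step is to show $\varphi$ is edge-preserving and edge-reflecting. There are two kinds of edges to handle. For the root: by Corollary~\ref{coro} the unique neighbour in $F(T)$ of the root $\star$ is the single vertex at the deepest level $n$, matching $N(z_0)=\bigcup_j z_j^n$; I would check this by noting $F^{(n-1)}(T)$ is a star (a $K_{1,m_n}$, $P_3$ or $P_2$), whose center is $\varphi(z_0)$'s neighbour(s). For a general edge $(z_j^i,z_l^k)$ with $i<k$: the definition declares it an edge iff $k=\min\{m>i:(x_l^m,y_j^i)\in E(K_4T)\}$, so I must translate the adjacency condition $(x_l^m,y_j^i)\in E(K_4T)$ into tree language. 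Here $x_l^m=(2,x,1,0)$ where $x=\varphi(z_l^m)$, and $y_j^i$ is the leaf $(0,w,3,0)$ or $(0,w,1,2)$ with $w=\varphi(z_j^i)$; by Proposition~\ref{hojas} (or directly from the edge conditions (2)--(3) of the definition), $y_j^i$ and $x_l^m$ are adjacent precisely when $w$ lies on direction $1$ (resp.\ $2$) of $x$ — i.e.\ when $w$ is a descendant of $x$ along the relevant branch. Taking the minimum over $m>i$ then picks out the \emph{nearest} such ancestor $x$ of $w$ among vertices of $F(T)$ surviving to layer level; because $F(T)$ is a tree and the level filtration is exactly the leaf-layer filtration, that nearest surviving ancestor is the parent of $w$ in $F(T)$. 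Hence $(z_j^i,z_l^k)\in E(F_T)\iff \varphi(z_l^k)$ is the parent of $\varphi(z_j^i)$ in $F(T)$, which is exactly the edge relation of $F(T)$.

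To finish, I would assemble these observations: $\varphi$ is a bijection on vertices (Step 1) carrying $E(F_T)$ bijectively onto $E(F(T))$ (Step 2), so $F_T\cong F(T)$. Since $T$ is binary and is determined by $F(T)$ (as recalled at the start of the subsection — each edge of $F(T)$ is subdivided into a path and each non-root vertex of $F(T)$ has two pendant subdivided edges, all lengths being readable from appropriate extra data, though for the isomorphism statement we only need $F(T)$ itself), this recovers $T$. The main obstacle I expect is Step 2, and specifically being careful that the index $l$ attached to a vertex of level $k$ is consistent with the index $j$ of its child at level $i$ via the $KP_{m_k}$-labelling — i.e.\ that when we peel off $S_1,\dots,S_{i-1}$ and re-extract a $KP_{m_i}$, the leaf $y_j^i$ attached to $x_j^i$ really is one of the two leaves $(0,w,3,0),(0,w,1,2)$ of a \emph{single} owner $w=\varphi(z_j^i)$, so that the adjacency $(x_l^m,y_j^i)\in E(K_4T)$ unambiguously encodes "w is a descendant of $\varphi(z_l^m)$." This requires invoking Proposition~\ref{vec} to rule out two distinct owners sharing a neighbourhood, and tracking that the peeling process does not mix up which owner a surviving vertex belongs to; once that bookkeeping is in place the rest is a direct unwinding of the edge conditions in the definition of $K_4T$.
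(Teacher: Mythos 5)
Your overall strategy is the one the paper uses: identify the level decomposition $V(K_4T)=\bigcup_i V(S_i)$ with the leaf-removal filtration of $F(T)$ (the paper phrases this as an induction on the number of levels, using $K_4T\setminus S_1=K_4T'$ with $F(T')=F(T)\setminus L(F(T))$), and then translate the adjacency-plus-minimality rule in the definition of $F_T$ into the parent relation, checking both directions via Corollary \ref{coro}. So the skeleton of your argument matches the paper's proof of Lemma \ref{iso}.

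There is, however, a genuine gap in the way you set up the bijection, and it sits exactly at the crux of the argument. You define $\varphi(z_j^i)$ as ``the owner'' $x$ of the pendant pair, claiming $x_j^i=(2,x,1,0)$ and $y_j^i\in\{(0,x,3,0),(0,x,1,2)\}$ with a \emph{single} underlying tree vertex $x$ for both. This is not what Corollary \ref{coro} gives: there, the leaf of $K_4T$ (hence of $S_i$, and recall $x_j^i\in L_i$ is the leaf, so you also have the two forms interchanged) is a tuple $(0,v,3,0)$ or $(0,v,1,2)$ and its unique neighbour is $(2,w,1,0)$, where $v$ and $w$ are two \emph{adjacent} vertices of $F(T)$ — a leaf of the current layer together with its neighbour — never the same vertex. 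Hence ``the common owner of $x_j^i$ and $y_j^i$'' does not exist, $\varphi$ is not well defined as stated, and your decoding of $(x_l^m,y_j^i)\in E(K_4T)$ as ``$\varphi(z_j^i)$ lies on direction $1$ or $2$ of $\varphi(z_l^m)$'' is carried out with the wrong tuple shapes; with the shapes from Corollary \ref{coro} the edge condition relates the owner of the $k=2$ tuple to the owner of the $(0,\cdot)$ tuple, and you must first decide which of the pair's two owners $z_j^i$ is meant to represent before the ``nearest surviving ancestor is the parent'' step can even be formulated. Proposition \ref{vec} cannot repair this: it says that two vertices of $K_4T$ with the same neighbourhood have the same underlying tree vertex, not that the two members of a pendant pair share an owner (they do not). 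This owner bookkeeping is precisely what the paper's induction step does level by level using Corollary \ref{coro}, and it is the missing ingredient in your proposal.
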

\begin{proof}
We proceed by induction over $n$, and assume $n=1$. This means that $K_4T=S_1\cong KP_{m_1}$ which means that $F(T)$ is a star $F(T) \cong K_{1,m_1}$. On the other hand, $F_T$ consists only of the vertices $z_0$ together with $\cup_{i=1}^{m_1}z_i^1$, but since $n=1$ we have that every vertex is adjacent to $z_0$, thus $F_T= K_{1,m_1}$.

Assume now, that the lemma is valid for values smaller than $n$. We have that $K_4T\setminus S_1 = K_4T'$ where $T'$ is such that $F(T')$ is obtained from $F(T)$ by removing every leaf. Then $V(K_4T')= \cup_{i=2}^n S_i$ thus by induction hypothesis, $F_{T'}=F(T')=F(T) \setminus L(F(T))$. 
Notice that the amount of vertices in $F(T)\setminus F(T')$ is the amount of leaves in $F(T)$ which is $m_1$, which is also the amount of vertices in $F_T\setminus F_{T'}$.
Take a vertex $w \in F(T)$, and let $v \in F(T)$ be a  leaf adjacent to $w$. By Corollary \ref{coro}, one of the following holds:
\begin{itemize}
    \item the vertex $(0,v,1,2)$ is a leaf in $K_4T$ and $(2,w,1,0)$ is its unique neighbour,
    \item the vertex $(0,v,3,0)$ is a leaf in $K_4T$ and $(2,w,1,0)$ is its unique neighbour, or
    \item the vertex $w$ is the root vertex of $T$, the vertex $(2,v,1,0)$ is a leaf in $K_4T$, and its unique neighbour is $(0,w,3,0)$.
\end{itemize}
Assume the first case holds. Then  $(0,v,1,2)=x_j^1$ and $(2,w,1,0)=y_i^1$ for $1 \leq i,j \leq m_1.$
 Since $v$ is not a leaf in $F(T),$ there exists $u \in V(F(T))$ such that $(v,u)\in E(F(T))$ and $u$ lies on $v$-direction 0. This means that $(2,v,1,0)$ is adjacent to $(0,u,3,0)$ or $(0,u,1,2)$. We can assume without loss of generality that  $(0,u,3,0)$ is adjacent to $(2,v,1,0)$, and thus it must also be adjacent to $(2,w,1,0)=y_i^1.$ Since $u$ is a neighbour of $v$, $(0,u,3,0)$  is a leaf in $K_4T$ and hence is a leaf in $S_2$, thus $(0,u,3,0)=x_l^2$. Then $(x_l^2, y_i^1) \in E(K_4T)$ and thus $(z_i^1, z_l^2) \in E(F_T)$.  This means that given an edge $(w,v) \in E(F(T))$ outside of $F(T')$, there exists an edge $(z_i^1, z_l^2) \in E(F_T)$. The other two cases are analogous.

 Now consider an edge $(z_i^1, z_j^2) \in F_T$ outside of $F_{T'}.$ This means that $(x_j^2, y_i^1) \in E(K_4T)$ with $x_j^2\in L_2$ and $ y_i^1 \in R_1$. Since $y_i^1 \in R_1$, again by Corollary \ref{coro} we have that $y_i^1 =(2,w,1,0)$ or $y_i^1 =(2,w,1,0)$ if $w$ is the root vertex of $F(T)$. Assume that $y_i^1 =(2,w,1,0)$, then $x_i^1 =(0,v,3,0) $ or $(0,v,1,2)$ and $v$ is the unique neighbour of the leaf $w$, thus $(v,w)\in F(T).$ If $y_i^1 =(2,w,1,0)$ with $w$ the root of $F(T)$, we have that $x_i^1 = (0,v,3,0)$ where $v$ is the unique neighbour of $w$. Hence for every edge in $F_T$ outside of $F_{T'}$ there exists an edge in $F(T)$ outside of $F(T')$. Thus $F(T)= F_T.$
\end{proof}

\subsection{The general case}
Let $RK_4T$ be the graph obtained from $K_4T$ as follows. For every set of vertices $\{v_1, \dots, v_{m}\}$ such that $N(v_i)=N(v_j)$ for $  i \neq j$, we remove from $K_4T$ the vertices $\{v_2, \dots, v_m\}$. Thus in $RK_4T$ there are no two vertices having the same neighbourhood.
Notice that the vertices of the graph $RK_4T$ can also be separated into $m_k$ sets $S_1, \dots, S_k$ such that each $S_i \cong KP_{m_i}$ for some $m_i.$ Assume again that $\{x_1^i \dots, x_{m_i}^i\}$ and $\{y_1^i \dots, y_{m_i}^i\}$ denote the set of leaves  and non-leaf vertices of $S_i$ for $1 \leq i \leq k$ respectively.  We then have the following definition.
\begin{defi} Assume $V(RK_4T)=\cup_{i=1}^k V(S_i)$ with $|V(S_i)|=m_i.$ The tree $RF_T$ is defined as follows.
  \begin{itemize}
       \item The vertices of $RF_T$ are $V(F_T)=z_0\cup\left( \cup_{i=1}^n \cup_{j=1}^{m_i} z_j^i\right)$ , and
       \item $N(z_0)= \cup_{j=1}^{m_n}z_j^n$,
       \item $(z_j^i,z_l^k)\in E(RF_T)$ if $k=\mbox{min}\{m>i:(x_l^m, y_j^i)\in E(RK_4T)\}.$
   \end{itemize} 
\end{defi}
The proof of the following Corollary is analogous to the proof of Lemma \ref{iso}.
\begin{coro}
The trees $RF_T$ and $F(T)$ are isomorphic.
\end{coro}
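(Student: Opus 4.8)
The plan is to mirror the inductive argument of Lemma \ref{iso}, with two adjustments that account for the passage from $K_4T$ to $RK_4T$. First I would observe that the quotient operation defining $RK_4T$ does not destroy any structural information we need: by Proposition \ref{vec}, any two vertices of $K_4T$ with the same non-empty neighbourhood are of the form $(k,x,p,q)$ and $(l,x,r,s)$ with the \emph{same} underlying tree-vertex $x$, so collapsing such a class of vertices to a single representative does not merge data coming from distinct vertices of $F(T)$. In particular the adjacency between the leaf-block $KP_{m_1}$ and the rest of $RK_4T$ still records, for each leaf $v$ of $F(T)$ and its unique neighbour $w$, exactly the incidences described in Corollary \ref{coro}; the only effect of the quotient is that the three (or fewer) vertices $(0,v,1,2),(0,v,3,0),(2,v,1,0)$ associated to a single tree-vertex that happen to share a neighbourhood are represented once, which is precisely what makes the decomposition $V(RK_4T)=\cup_i V(S_i)$ with $S_i\cong KP_{m_i}$ well defined.

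Next I would set up the induction on the number $k$ of blocks. The base case $k=1$ is identical to that of Lemma \ref{iso}: $RK_4T\cong KP_{m_1}$ forces $F(T)\cong K_{1,m_1}$, and $RF_T$ is by construction the star on the vertices $z_0,z_1^1,\dots,z_{m_1}^1$. For the inductive step, I would use that deleting the top block $S_1$ from $RK_4T$ yields $RK_4T'$ for the tree $T'$ with $F(T')=F(T)\setminus L(F(T))$ — this uses the remark preceding the definition of levels, namely that deleting the $KP$ corresponding to the leaves of $F(T)$ leaves exactly the vertices $(k,x,p,q)$ with $x$ not a leaf of $F(T)$, together with the fact that the same-neighbourhood relation is inherited by the subgraph. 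By the induction hypothesis $RF_{T'}\cong F(T')$, so it remains to check that the edges of $RF_T$ joining level $1$ to level $2$ correspond bijectively to the edges of $F(T)$ incident to a leaf.

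That bijection is the heart of the argument, and it is carried out exactly as in the last two paragraphs of the proof of Lemma \ref{iso}, word for word, reading ``$RK_4T$'' for ``$K_4T$'' and invoking Corollary \ref{coro} to identify $x_j^1\in L_1$ with one of $(0,v,1,2),(0,v,3,0),(2,v,1,0)$ and $y_i^1\in R_1$ with $(2,w,1,0)$ (or with $(0,w,3,0)$ when $w$ is the root): given an edge $(w,v)$ of $F(T)$ with $v$ a leaf, the neighbour $u$ of $v$ on $v$-direction $0$ produces a leaf of $RK_4T$ lying in $S_2$ adjacent to $y_i^1$, hence an edge $(z_i^1,z_l^2)$ of $RF_T$; conversely an edge $(z_i^1,z_j^2)$ forces, via Corollary \ref{coro}, an edge of $F(T)$ at the leaf $w$. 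Counting (both sides have $m_1$ such edges) upgrades these two containments to an equality of edge sets, completing the induction.

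I expect the one genuine obstacle to be the claim that the quotient $K_4T\rightsquigarrow RK_4T$ is ``harmless,'' i.e. that the block decomposition and the min-index rule in the definition of $RF_T$ still produce a well-defined tree after identifying same-neighbourhood vertices. Everything hinges on Proposition \ref{vec} guaranteeing such identifications never cross different tree-vertices $x$; once that is stated cleanly, the rest is a transcription of Lemma \ref{iso}. If one wanted to be careful, the place to spend a sentence is verifying that in $RK_4T$ each leaf of $F(T)$ still contributes at least one leaf-vertex to $S_1$ (so that no block ``disappears'' under the quotient), which again follows from Corollary \ref{coro} together with the observation that the leaf-vertices $(0,v,1,2)$, $(0,v,3,0)$, $(2,v,1,0)$ it lists are genuinely vertices of $K_4T$ and are not deleted unless a representative of the class survives.
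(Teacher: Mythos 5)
Your proposal is correct and follows essentially the same route as the paper, which simply states that the proof is analogous to that of Lemma \ref{iso}; you carry out exactly that transcription, with the (appropriate) extra remark that Proposition \ref{vec} is what makes the identification of same-neighbourhood vertices harmless.
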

Since $T$ is not binary it is not fully determined by $F(T).$ Consider the set of vertices $LR=\{x_1, \dots, x_{m_1}\}$ of $K_4T$, which have degree one in $RK_4T$. Assume that $x_i$ has degree $d_i $ in $K_4T$ for $1 \leq i \leq m_1$. Then $d_i = \frac{1}{2}(k_i-1)(k_i-2)$ for some integer $k_i$.

\begin{prop}
    If a leaf $x \in RK_4T$ has degree $d=\frac{1}{2}(k-1)(k-2)$ in $K_4T$ for some $k$. Then the corresponding leaf $z \in F(T)$ has degree $k$ in $T$.
\end{prop}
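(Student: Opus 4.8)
The plan is to pin down which vertex of $K_4T$ the leaf $x$ represents, to rewrite the number $d$ in terms of the degree in $T$ of the corresponding leaf $z$ of $F(T)$, and then to invert the resulting quadratic relation.

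I would start from the observation that, by the discussion preceding the definition of $RF_T$, the leaves of $RK_4T$ are precisely the vertices of $L_1$, and such a leaf $x=x_j^1$ corresponds to a leaf $z$ of $F(T)$. As in Corollary~\ref{coro}, $x$ is (the class of) a vertex $(0,z,p,q)$ whose tree vertex $z$ is a leaf of $F(T)$, with $p$ and $q$ supported in the single direction $i_0$ of $z$ leading to its $F(T)$-neighbour $u$, and the unique $RK_4T$-neighbour $y$ of $x$ has tree vertex $u$. Since $x$ is a leaf of $RK_4T$, the number $d$ is read off either from the set of vertices of $K_4T$ that collapse to $x$ or from the common neighbourhood of those vertices; in either case I would reduce the statement to the identity
\[
d=\tfrac12\bigl(d(z)-1\bigr)\bigl(d(z)-2\bigr).
\]

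The main input for this identity is Proposition~\ref{num}: for the tree vertex $z$ and any fixed direction $i$, the set $\Lambda_i$ has $\tfrac12(d(z)-1)(d(z)-2)$ elements, which is also the number of vertices of $K_4T$ with tree vertex $z$ and $k=2$. The heart of the argument is to match this collection with the vertices counted by $d$, and the decisive fact is that $z$ is a leaf of $F(T)$: all of the $d(z)-1$ directions of $z$ other than $i_0$ are hairs, so by Proposition~\ref{hojas} any vertex with tree vertex $z$ whose distinguished direction is one of those hairs is isolated, hence irrelevant; the surviving vertices at $z$ are exactly those with distinguished direction $i_0$, together with the $k=2$ vertices at $z$, which are handled in parallel in the case analysis. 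One then argues, with a short case distinction according to whether $z$ is the root of $F(T)$ and, if not, whether $u$ sits in direction $0$ of $z$ or in a forward direction, that the vertices collapsing to $x$ are in bijection with $\Lambda_{i_0}$, using Proposition~\ref{vec} to rule out any vertex with a different tree vertex entering the picture. This yields the displayed identity.

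Finally, since $z\in V(F(T))$ we have $d(z)\ge 3$, and on $\{3,4,5,\dots\}$ the map $t\mapsto\tfrac12(t-1)(t-2)$ takes the strictly increasing values $1,3,6,10,\dots$; therefore $\tfrac12(k-1)(k-2)=d=\tfrac12(d(z)-1)(d(z)-2)$ forces $k=d(z)$, which is the assertion. I expect the genuine obstacle to be the bijection of the third paragraph: tracking exactly which vertices of $K_4T$ with tree vertex $z$ persist into $RK_4T$ and pairing them with $\Lambda_{i_0}$ is delicate, above all in the boundary cases where $z$ is the root of $F(T)$ or where the $F(T)$-neighbour of $z$ lies on the path toward the global root $\star$.
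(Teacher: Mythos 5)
Your overall strategy (reduce to the identity $d=\tfrac{1}{2}(d(z)-1)(d(z)-2)$ via Proposition \ref{num}, then invert using the strict monotonicity of $t\mapsto\tfrac{1}{2}(t-1)(t-2)$ for $t\geq 3$) is the right frame, and the final inversion step, which the paper leaves implicit, is fine. But the step you yourself flag as the obstacle is exactly where the proposal goes wrong, in two ways. First, the identification of the leaf: if $z$ is a leaf of $F(T)$ other than the root of $F(T)$, its unique $F(T)$-neighbour lies on $z$-direction $0$, so there is no direction $i_0$ of $z$ ``leading to its neighbour'' among the directions indexed by $p$ and $q$; all of the directions $1,\dots,d(z)-1$ of $z$ are hairs, every vertex at $z$ with $k\neq 2$ is isolated, and the only surviving vertices based at $z$ are the $k=2$ ones. (Your description also does not match Corollary \ref{coro}, whose leaf has its vector supported in the hair directions, not in a direction pointing at the neighbour.) In the paper's proof the leaf $x$ of $RK_4T$ is a class of vertices based at the \emph{neighbour} $w$ of $z$: when $z$ is not the root of $F(T)$ it is the class of vertices $(0,w,p,q)$ with a $2$ or $3$ in the entry of the $w$-direction containing $z$, and its $K_4T$-neighbourhood is precisely the set of vertices $(2,z,p',0)$; when $z$ is the root of $F(T)$ the leaf is the class of $k=2$ vertices at $w$ and its neighbourhood is the set of $\Lambda_i$-type vertices at $z$.

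Second, and decisively, $d$ is the degree of $x$ in $K_4T$, i.e.\ the size of its neighbourhood, not the number of vertices of $K_4T$ that collapse onto $x$ in $RK_4T$; you hedge between the two readings and then construct your bijection for the collapsing class (``the vertices collapsing to $x$ are in bijection with $\Lambda_{i_0}$''). That counts the wrong set: the class collapsing to the leaf consists of vertices based at $w$, and its cardinality is $\tfrac{1}{2}(d(w)-1)(d(w)-2)$, governed by $d(w)$ rather than $d(z)$. What actually has to be shown -- and what the paper's proof does -- is that the common $K_4T$-neighbourhood of that class consists exactly of certain vertices based at $z$ (the $k=2$ vertices at $z$ in the non-root case, the $\Lambda_i$ vertices at $z$ in the root case), using that $z$ is the unique essential vertex in the relevant branch; Proposition \ref{num} then counts this neighbourhood as $\tfrac{1}{2}(d(z)-1)(d(z)-2)$, which is the number $d$. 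As proposed, your argument never establishes the displayed identity, so there is a genuine gap at its core.
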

    \begin{proof}
        Let $z$ be a leaf in $F(T)$, and let $w$ be its unique neighbour in $F(T)$. Assume also that $z$ lies on $w$ direction $i$. If $ z \neq \star$, then $i \neq 0$ and consider a vertex $v=(0,w, p,q) \in K_4T$ having $2$ or $3$ as the $i$th entry of $p$ or as the $i-l(p)$th entry of $q$. They all have the same neighbourhood consisting of vertices of the form $(2,z,p,0)$, and by Proposition \ref{num}, there are $d=\frac{1}{2}(k-1)(k-2)$ such vertices, where $d(z)=k$. Thus all the vertices of the form of $v$ correspond to a leaf $x \in RK_4T$ which has degree $d=\frac{1}{2}(k-1)(k-2)$. If $z=\star $, we consider vertices of the form $v=(0,z, p,q)$  having $2$ or $3$ as the $i$th entry of $p$ or as the $i-l(p)$th entry of $q$, and they all have the same neighbourhood consisting of vertices of the form $(2, w,p,0)$. Thus these last vertices correspond to a leaf $x\in RK_4T$ which  has the desired degree in $K_4T.$
        
    \end{proof}
This means that we have fully recovered $T$ from the graph $K_4T$.

\subsection*{Declarations}
On behalf of all authors, the corresponding author states that there is no conflict of interest. This manuscript has no associated data.

\end{document}